\newtheorem{thm}{Theorem}[section]
\newtheorem{lemma}[thm]{Lemma}
\newtheorem{prop}[thm]{Proposition}
\theoremstyle{definition}
\newtheorem{rmk}[thm]{Remark}
\newcommand{\ep}{\epsilon}
\newcommand{\ome}{\Omega_{\ep}}
\newcommand{\pome}{\partial\Omega_{\ep}}
\newcommand{\rn}{\mathbb{R}^n}
\newcommand{\inte}{\int_{\ome}}
\newcommand{\intwe}{\int_{\widetilde{\Omega}_\ep}}
\newcommand{\intal}{\int_{A_l}}
\newcommand{\home}{H^1_0(\ome)}
\newcommand{\udx}{U_{\delta,\xi}}
\newcommand{\pdxn}{\psi_{\delta, \xi}^0}
\newcommand{\pdxj}{\psi_{\delta, \xi}^j}
\newcommand{\md}{\mathbf{d}}
\newcommand{\ms}{\boldsymbol\sigma}
\newcommand{\mt}{\mathbf{t}}
\newcommand{\mca}{\mathcal{A}}
\newcommand{\mcb}{\mathcal{B}}
\newcommand{\mcd}{\mathcal{D}}
\newcommand{\sik}{\sum_{i=1}^k}
\newcommand{\slk}{\sum_{l=1}^k}
\renewcommand{\(}{\left(}
\renewcommand{\)}{\right)}
\newcommand{\pe}{^{\perp}}
\newcommand{\la}{\left\langle}
\newcommand{\ra}{\right\rangle}
\newcommand{\wb}{\widetilde{b}}
\newcommand{\wpe}{\widetilde{\Psi}}
\newcommand{\ps}{\partial_s}
\newcommand{\rr}{\mathbb{R}}
\begin{document}
\title{Supercritical problems in domains with thin toroidal holes}
\author{Seunghyeok Kim}
\address[Seunghyeok Kim] {Department of Mathematical Sciences, Korea Advanced Institute of Science and Technology, Daejeon 305-701, Republic of Korea}
\email{shkim0401@gmail.com}

\author{Angela Pistoia}
\address[Angela Pistoia] {Dipartimento SBAI, Universit\`{a} di Roma ``La Sapienza", via Antonio Scarpa 16, 00161 Roma, Italy}
\email{pistoia@dmmm.uniroma1.it}

\begin{abstract}
In this paper we study the  Lane-Emden-Fowler equation
$$ (P)_ \ep\quad \left\{
\begin{aligned}
&\Delta u+|u|^{q-2}u=0\ &\hbox{in}\ \mathcal D_ \ep,\\
& u=0\ &\hbox{on}\ \partial\mathcal D_ \ep.\\
\end{aligned}\right.
$$
Here $\mathcal D_ \ep=\mathcal D\setminus \left\{x\in \mathcal D\ :\ \mathrm{dist}(x,\Gamma_\ell)\le  \ep
\right\}$, $\mathcal D$ is a smooth bounded domain in $\rr^N$, $\Gamma_\ell$ is an $\ell-$dimensional closed manifold such that
$\Gamma_\ell\subset\mathcal D$ with $1\le  \ell\le N-3$
and $q={2(N-\ell)\over N-\ell-2} .$
We prove that, under some symmetry assumptions,  the number of sign changing solutions
to $ (P)_ \ep$  increases as $ \ep$ goes to zero.
\end{abstract}

\date{May 17, 2013.}
\subjclass[2010]{}
\keywords{Supercritical problem. Concentration on $\ell-$dimensional manifolds}
\thanks{}
\maketitle
\numberwithin{equation}{section}

\section{Introduction}
The paper deals with the Lane-Emden-Fowler equation
\begin{equation}\label{p0}
\left\{
\begin{aligned}
&\Delta u+|u|^{q-2}u=0\ &\hbox{in}\ \mathcal D,\\
& u=0\ &\hbox{on}\ \partial\mathcal D,\\
\end{aligned}\right.
\end{equation}
where $\mathcal D$ is a smooth bounded domain in $\rr^N$ and $q>2.$

A main characteristic of   problem \eqref{p0} is the role played by the $(\ell+1)-$critical exponent $2^*_{N,\ell} $  in the solvability question.
Here $
2^*_{N,\ell}:={2(N-\ell)\over N-\ell-2}$ where  $\ell$ is an integer between $0$ and $N-3.$
  $2^*_{N,\ell}$ is nothing but the  critical Sobolev exponent in dimension $N-\ell,$ i.e. $2^*_{N,\ell}= 2^*_{N-\ell,0}.$

If $q<2^*_{N,0}$ problem \eqref{p0} has one positive solution and infinitely many sign changing solutions. The proof relies on   standard variational arguments and
uses
the compactness of the embedding $H^1_0(\mathcal D)\hookrightarrow L^{q}(\mathcal D).$ When
$q\ge 2^*_{N,0}$ the compactness of the embedding is not true anymore and so existence of solutions becomes a delicate issue.
Pohozaev \cite{Po} discovered that no solution exists when the domain is star-shaped. On the other hand Kazdan-Warner \cite{KW} proved that if $\mathcal D$ is an annulus the compactness is restored in the class of radial functions and so problem \eqref{p0} has  one positive radial solution and infinitely many sign changing radial solutions for any $q$.
If
$q= 2^*_{N,0}$ Bahri and Coron \cite{BC} established the existence of at least
one positive solution to problem \eqref{p0} in every domain $\mathcal{D}$
having nontrivial reduced homology with $\mathbb{Z}/2$-coefficients. However, the topology
in the supercritical case   is not enough to guarantee existence. In fact,
for each $1\leq\ell\leq N-3,$ Passaseo \cite{Pa1,Pa2} exhibited domains having
the homotopy type of an $\ell$-dimensional sphere in which problem \eqref{p0}
does not have a nontrivial solution for $q\ge 2^*_{N,\ell}.$ Existence may fail even in domains with richer topology, as
shown by Clapp-Faya-Pistoia \cite{CFP}.

Many results about solvability of \eqref{p0} have been obtained when the exponent $q$ is close to    $2^*_{N,\ell}$ for some integer $\ell$. In particular, in this case
  it is possible to find  positive and sign changing  solutions which blows-up at $\ell-$dimensional manifolds as $q$ approaches $2^*_{N,\ell}$.
In the easiest case $\ell=0$ many authors have constructed positive and sign changing solutions which blows-up at one or more points in $\mathcal D$ as $q$ approaches the usual critical Sobolev exponent $2^*_{N,0}$ (see for example Bahri-Li-Rey \cite{BLR}, Bartsch-Micheletti-Pistoia \cite{BMP}, Musso-Pistoia \cite{MP}, Pistoia-Weth \cite{PW}, Del Pino-Felmer-Musso \cite{DFM}). We note that we could think at a point like a $0-$dimensional manifold!
When $\ell=1$  Del Pino-Musso-Pacard \cite{DMP}   built a positive solution to \eqref{p0} which blows-up at a suitable geodesic (i.e. $1-$dimensional manifold) of the boundary of $\mathcal D$ as $q$ approaches $2^*_{N,1}$.
Recently, positive and sign changing solutions blowing-up at  $\ell-$dimensional manifolds as $q$ approaches $2^*_{N,\ell}$
have been built  in domains involving symmetries by Ackermann-Clapp-Pistoia \cite{ACP} and Kim-Pistoia \cite{KP,KP2}.

  There are a few results about existence and multiplicity of solutions to problem \eqref{p0}  in the pure   supercritical case, i.e. $q=2^*_{N,\ell}.$   In particular,  Yan-Wei \cite{YW}   exhibited a torus-like domain in which problem \eqref{p0} has infinitely many positive solutions. Recently, Clapp-Faya-Pistoia \cite{CFP2} used  Hopf fibration to build a positive solution to problem \eqref{p0} when the domain $\mathcal D$ is an annulus in $\mathbb R^{2N}$ with a think spherical hole.

\medskip
In this paper we consider the supercritical problem \eqref{p0}   in a domain with an $\ell-$dimensional hole, namely
\begin{equation}\label{p1}
\left\{
\begin{aligned}
&\Delta u+|u|^{q-2}u=0\ &\hbox{in}\ \mathcal D_\ep,\\
 &u=0\ &\hbox{on}\ \partial\mathcal D_\ep,\\
\end{aligned}\right.
\end{equation}
where  $\mathcal D_ \ep:=\mathcal   D\setminus \left\{x\in \mathcal D\ :\ \mathrm {dist}(x,\Gamma_\ell)\le \ep\right\}$,   $\mathcal D$ is  a smooth bounded domain in $\rr^N,$
  $\Gamma_\ell$  is a closed $\ell-$dimensional manifold such that $\Gamma\subset \mathcal D,$    $\ep$ is small enough and $q\ge2^*_{N,\ell}$ .

If $\ell=0,$  the set $\Gamma_\ell$ reduces to a point $\xi_0\in\mathcal D$ and $\mathcal D_ \ep:=\mathcal   D\setminus B(\xi_0,\ep) $ is the Coron's type domain.
In this case, it is known that    problem   \eqref{p1}   has one positive solution  and an arbitrary large number of sign changing solutions whose number increases as $\ep$ goes to zero, for almost all the exponents $q$'s.
The critical case
$q=2^*_{N,0}$ has been studied by Coron \cite{Co}, Musso-Pistoia \cite{MP2} and Ge-Musso-Pistoia \cite{GMP}.
When
  $q> 2^*_{N,0}$  and $q$ is different from a resonant sequence $q_j\nearrow+\infty $,   the result has been obtained  by
 Del Pino-Wei \cite{DW} and Dancer-Wei \cite{DaW}.

 A  question naturally arises:
 \textit     {  if $1\le \ell \le N-3$  and $q=2^*_{N,\ell}$ or
  $q> 2^*_{N,\ell}$  (possibly different from a resonant sequence $q_j\nearrow+\infty $) does problem   \eqref{p1}   have one positive solutions and an arbitrary large number of sign changing solutions whose number increases as $\ep$ goes to zero?}

  In this paper   we give a positive answer  in  the pure supercritical case $q=2^*_{N,\ell}$ provided the domain $\mathcal D$ satisfies some symmetry assumptions.
In particular,   for any
   integer  $1\le\ell
 \leq N-3$ we build torus-like domains $\mathcal{D} $ and torus-like manifolds $\Gamma_\ell$ for which the number of sign-changing solutions to problem
\eqref{p1} with $q=2^*_{N,\ell}$ increases as $\ep$ goes to zero. These solutions have an arbitrary large number
of alternate positive and  negative layers which concentrate with different rates along the
$\ell$-dimensional  manifold $\Gamma_\ell . $
More precisely, let us state our main results.

Fix $\ell _{1},\ldots,\ell _{m}\in\mathbb{N}$ with $\ell :=\ell _{1}+\cdots+\ell _{m}\leq N-3$ and
a bounded smooth domain $\Omega$ in $\mathbb{R}^{n }$ with $n:=N-\ell$ such that
\begin{equation}
\overline{\Omega}\subset\{\left(  x_{1},\ldots,x_{m},x^{\prime}\right)
\in\mathbb{R}^{m}\times\mathbb{R}^{N-\ell -m}:x_{i}>0,\text{ }i=1,\ldots,m\}.
\label{omega}%
\end{equation}
Let $\xi_0\in\Omega$ be fixed and set $\Omega_\ep:=\Omega\setminus B(\xi_0,\ep) $ for $\ep$ small enough.
Set
\begin{equation}
\mathcal{D}:=\{(y^{1},\ldots,y^{m},z)\in\mathbb{R}^{\ell _{1}+1}\times\cdots
\times\mathbb{R}^{\ell _{m}+1}\times\mathbb{R}^{N-\ell -m}:\left(  \left\vert
y^{1}\right\vert ,\ldots,\left\vert y^{m}\right\vert ,z\right)  \in\Omega\}
\label{D}%
\end{equation}
and
\begin{equation}\label{gep}
\Gamma _\ell:=\{(y^{1},\ldots,y^{m},z)\in\mathbb{R}^{\ell_{1}+1}\times\cdots
\times\mathbb{R}^{\ell_{m}+1}\times\mathbb{R}^{N-\ell-m}:\left(  \left\vert
y^{1}\right\vert ,\ldots,\left\vert y^{m}\right\vert ,z\right)  =\xi_{0}\}.
\end{equation}
$\mathcal{D}$ is a bounded smooth domain in $\mathbb{R}^{N}$ and
 $\Gamma _\ell$ is an $\ell-$dimensional manifold in $\mathcal D$ which is diffeomorphic to $\mathbb{S}^{\ell_{1}}\times\cdots\times
\mathbb{S}^{\ell_{m}}.$ Here $\mathbb{S}^{d}$ is the unit sphere in
$\mathbb{R}^{d+1}.$
Set $\mathcal D_ \ep:=\mathcal   D\setminus \left\{x\in \mathcal D\ :\ \mathrm {dist}(x,\Gamma_\ell)\le \ep\right\}.$
Note that $\mathcal D,$ $\Gamma_\ell$ and $\mathcal D_\ep$ are
invariant under the action of the group $\Theta:=O(\ell _{1}+1)\times\cdots\times
O(\ell _{m}+1)$ on $\mathbb{R}^{N}$ given by%
\begin{equation*}
(g_{1},\ldots,g_{m})(y^{1},\ldots,y^{m},z):=(g_{1}y^{1},\ldots,g_{m}y^{m},z).
\end{equation*}
for every $g_{i}\in O(\ell _{i}+1),$ $y^{i}\in\mathbb{R}^{\ell _{i}+1},$
$z\in\mathbb{R}^{N-\ell -m}.$ Here, as usual, $O(d)$ denotes the group of linear
isometries of $\mathbb{R}^{d}.$ Let $q=2^*_{N,\ell} $ and note that
${2}_{N,\ell}^{\ast}=2^*_{n,0}.$  We shall
look for $\Theta$-invariant solutions to problem \eqref{p1}, i.e.
solutions $v$ of the form%
\begin{equation}
v(y^{1},\ldots,y^{m},z)=u(\left\vert y^{1}\right\vert ,\ldots,\left\vert
y^{m}\right\vert ,z). \label{inv}%
\end{equation}
A simple calculation shows that $v$ solves problem \eqref{p1} if and only
if $u$ solves
\begin{equation*}
-\Delta u-\sum_{i=1}^{m}\frac{\ell_{i}}{x_{i}}\frac{\partial u}{\partial x_{i}%
}=|u|^{2^*_{N,\ell}-2}u\quad\text{in}\ \Omega_\ep,\qquad u=0\quad\text{on}\ \partial\Omega_\ep.
\end{equation*}
This problem can be rewritten as%
\begin{equation}\label{p10}
-\text{div}(a(x)\nabla u)=a(x)|u|^{2^*_{n,0}-2}u\quad\text{in}\ \Omega_\ep,\qquad
u=0\quad\text{on}\ \partial\Omega_\ep,
\end{equation}
where $a(x_{1},\ldots,x_{n}):=x_{1}^{\ell_{1}}\cdots x_{m}^{\ell_{m}}.$
  Our
goal is to construct solutions $u_{\epsilon}$ to problem \eqref{p10} with an arbitrary large number of alternate positive and negative
bubbles which accumulate with different rates at the same point  $\xi_{0}$ as  $\epsilon\rightarrow0.$ They correspond, via (\ref{inv}),
to $\Theta$-invariant solutions $v_{\epsilon}$ of problem \eqref{p1} with
positive and negative layers which accumulate with different rates along the $\ell$-dimensional
 manifold $\Gamma_\ell $  defined in \eqref{gep}.

Thus, we are lead to study the more general  anisotropic critical problem
\begin{equation}\label{eq_main}
\left\{
\begin{aligned}
&-\text{div}(a(x)\nabla u) = a(x) |u|^{4 \over n-2}u\ &\hbox{in}\ \ome,\\
& u=0\ &\hbox{on}\ \partial\ome,\\
\end{aligned}\right.
\end{equation}
where $\Omega$ is a smooth bounded domain in $\rn$, $\xi_0\in\Omega$, $\ome := \Omega \setminus B(\xi_0, \ep)$ for $\ep$ small enough and  $a \in C^2(\overline{\Omega})$ satisfies $\min\limits_{x\in\Omega }a(x)>0.$

First of all, we construct sign-changing solutions of \eqref{eq_main} whose shape resemble a tower of bubbles with alternate sign centered at the point   $\xi_0$.
We recall that the bubbles
\begin{equation}\label{eq_udx}
\udx(x) := \alpha_n \({\delta \over {\delta^2 + |x -\xi|^2}}\)^{n-2 \over 2} \quad \text{for some } \delta > 0, ~ \xi \in \rn,
\end{equation}
where $\alpha_n = (n(n-2))^{n-2 \over 4}$ are the positive solution for the problem
\begin{equation}\label{eq_limit}
- \Delta u = u^{n+2 \over n-2} \quad \text{in } \rn, \quad u \in \mathcal D^{1,2}(\rn).
\end{equation}

Our main result concerning problem \eqref{eq_main} reads as follows.

 \begin{thm}\label{thm_main}
Suppose $n \ge 4$. For any integer $k \ge 1$, there is $\ep_k > 0$ such that for each $\ep \in (0, \ep_k)$, problem \eqref{eq_main} has a sign-changing solution $u_\ep$ which satisfies
\[u_\ep(x) =   \sik (-1)^i U_{\delta_i(\ep),\xi_i(\ep)}(x) + o(1) \quad \text{in}\ H^1(\Omega_\ep),\]
where  the weights $\delta_i(\ep)  $ and the centers $\xi_i(\ep)$  satisfy for any $i=1,\dots,k$
\begin{equation*}
\ep^{-{(n-2)+2(i-1) \over (n-1)+2(k-1)}} \delta_i(\ep) \to d_i > 0\ \hbox{and}\   \xi_i(\ep)\to\xi_0\quad \text{as}\ \ep \to 0.\end{equation*}
\end{thm}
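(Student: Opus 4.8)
The plan is to build the solution by the Lyapunov–Schmidt finite-dimensional reduction, using the tower of $k$ alternate-sign bubbles as the ansatz. First I would fix the natural scaling dictated by the statement: write $\delta_i = \delta_i(\ep)$ with $\ep^{-\mu_i}\delta_i \to d_i$, where $\mu_i = \frac{(n-2)+2(i-1)}{(n-1)+2(k-1)}$, so that consecutive bubbles have comparable but distinct rates and the whole cluster shrinks to $\xi_0$. One should check that these exponents are precisely the ones that balance the interaction energy between $U_{\delta_i,\xi_i}$ and $U_{\delta_{i+1},\xi_{i+1}}$ against the ``boundary'' effect coming from the small hole $B(\xi_0,\ep)$ — the hole plays here the role that the domain boundary plays in the classical Coron-type constructions, and its contribution to the energy expansion is what forces the $\ep$-dependence of the $\delta_i$. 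I would introduce projected bubbles $P_\ep U_{\delta_i,\xi_i}$, i.e. the solutions of $-\mathrm{div}(a\nabla \cdot) = a\, U_{\delta_i,\xi_i}^{(n+2)/(n-2)}$ in $\Omega_\ep$ with zero boundary data, and record the standard estimates for $U_{\delta_i,\xi_i} - P_\ep U_{\delta_i,\xi_i}$ in terms of the Robin function of $\Omega_\ep$ and of the anisotropic weight $a$.

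The reduction itself proceeds along familiar lines. The ansatz is $u = W_{\bm\delta,\bm\xi} + \phi$ with $W_{\bm\delta,\bm\xi} = \sum_{i=1}^k (-1)^i P_\ep U_{\delta_i,\xi_i}$, and the parameters range over a configuration set where each $\delta_i$ is of order $\ep^{\mu_i}$ and each $\xi_i$ is within a small ball of $\xi_0$. Working in the weighted space $H^1_0(\Omega_\ep)$ with inner product induced by $a$, I would show that the linearized operator at $W_{\bm\delta,\bm\xi}$, restricted to the orthogonal complement of the span of the $PZ^j_{\delta_i,\xi_i}$ (the projected derivatives of the bubbles in $\delta$ and in the $\xi$-directions), is invertible with inverse bounded uniformly in $\ep$; this is the usual nondegeneracy argument for the bubble, carried through the weight $a$ and through the presence of the hole, where one uses that the bubbles are well separated in scale and that the hole is asymptotically negligible on the scale of each bubble. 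A contraction-mapping argument then yields, for each admissible $(\bm\delta,\bm\xi)$, a unique small $\phi = \phi_{\bm\delta,\bm\xi}$ solving the infinite-dimensional part of the equation, with the quantitative bound $\|\phi\| = O(\text{error terms})$ governed by how well the ansatz solves \eqref{eq_main} plus the interaction terms.

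It then remains to solve the bifurcation (finite-dimensional) system, which is equivalent to finding critical points of the reduced energy $J_\ep(\bm\delta,\bm\xi) := I_\ep(W_{\bm\delta,\bm\xi}+\phi_{\bm\delta,\bm\xi})$, where $I_\ep$ is the energy functional associated with \eqref{eq_main}. I would expand $J_\ep$ in the parameters $\Lambda_i := \ep^{-\mu_i}\delta_i$ and $\xi_i$: the leading term is $k$ copies of the bubble energy $\tfrac1n a(\xi_0) S^{n/2}$, and the next-order term, of size a fixed power of $\ep$, is a sum of three types of contributions — the self-interaction of each bubble with the hole (a Robin-function term, increasing in $\delta_i$), the mutual interaction of consecutive bubbles $U_{\delta_i,\xi_i}$ and $U_{\delta_{i+1},\xi_{i+1}}$ (decreasing in $\delta_{i+1}/\delta_i$), and lower-order pieces involving $\nabla a(\xi_0)$ and $\xi_i - \xi_0$. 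The choice of exponents $\mu_i$ is exactly what makes all these terms the same order in $\ep$, so that the reduced functional, after factoring out that power of $\ep$, converges to an explicit function $\Psi(d_1,\ldots,d_k)$ (plus a quadratic form in the $\xi$-variables with $\xi_0$ as nondegenerate minimum) which has a stable critical point — typically a minimum or a min-max value — at some $(d_1,\ldots,d_k)$ with all $d_i>0$. A standard stability/degree argument then produces a critical point of $J_\ep$ close to it, giving the desired $u_\ep$, and sign-changing follows because the signs alternate and the bubbles dominate.

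The main obstacle I anticipate is the precise energy expansion with its \emph{two} competing small quantities — the hole radius $\ep$ and the ratios $\delta_{i+1}/\delta_i$ — together with the $a$-dependence: one must compute the interaction integrals $\int_{\Omega_\ep} a\, U_{\delta_i,\xi_i}^{(n+2)/(n-2)} P_\ep U_{\delta_{i+1},\xi_{i+1}}$ and the hole-corrections $\int a\,U_{\delta_i}^{(n+2)/(n-2)}(U_{\delta_i}-P_\ep U_{\delta_i})$ to high enough accuracy, in all dimensions $n\ge 4$ (the cases $n=4$ and small $n$ usually requiring separate care because of the slow decay of $U_\delta - P_\ep U_\delta$), and then verify that the limiting function $\Psi$ genuinely has a critical point with all coordinates positive and that this critical point is topologically stable under the perturbation coming from $\phi$ and from the error terms. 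Controlling the error terms uniformly as $k$ grows — so that $\ep_k$ can be chosen — and handling the interplay between the reduction estimates and the scale separation is the technically delicate heart of the argument.
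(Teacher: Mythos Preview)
Your overall scheme---Lyapunov--Schmidt reduction with a tower of projected bubbles, invertibility of the linearized operator on the orthogonal complement, contraction for $\phi$, then a critical-point argument for the reduced energy---is exactly the paper's approach. But your description of the reduced energy has a genuine gap: you relegate the $\nabla a(\xi_0)$ contribution to ``lower-order pieces'' and predict that the $\xi$-variables decouple as a quadratic form with minimum at $\xi_0$. With the exponents $\mu_i$ given in the statement one actually has $\delta_1\sim(\delta_{i+1}/\delta_i)^{(n-2)/2}\sim(\ep/\delta_k)^{n-2}\sim\ep^{(n-2)/(n+2k-3)}$, while the outer-boundary Robin contribution is of order $\delta_1^{n-2}$, strictly smaller. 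The anisotropy term is therefore \emph{leading order}, and the limiting functional (with $\xi_i=\xi_0+\delta_i\sigma_i$) is
\[
\Psi(\md,\ms)=c_2\langle\nabla a(\xi_0),\sigma_1\rangle\,d_1+\frac{c_3 a(\xi_0)}{(1+|\sigma_k|^2)^{n-2}}\,\frac{1}{d_k^{\,n-2}}+\sum_{i=1}^{k-1}\frac{c_4 a(\xi_0)}{(1+|\sigma_i|^2)^{(n-2)/2}}\Big(\frac{d_{i+1}}{d_i}\Big)^{(n-2)/2}.
\]
The linear term $c_2\langle\nabla a(\xi_0),\sigma_1\rangle d_1$ is precisely what fixes $d_1$: without it, $d_1$ enters only through the ratio $d_2/d_1$ and $\Psi$ has no interior critical point. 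Correspondingly the critical $\sigma_1$ is $\nabla a(\xi_0)/|\nabla a(\xi_0)|$, not $0$, and the $\sigma$- and $d$-variables do not decouple as you suggest.

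This matters for the final step. The paper establishes existence by showing that this explicit $\Psi$ has a \emph{nondegenerate} critical point (via a direct Hessian computation after the substitution $d_i=t_1\cdots t_i$), so that the $C^1$-expansion of $J_\ep$ guarantees a nearby critical point. Your proposed ``stability/degree argument'' would first require identifying the correct $\Psi$; with the functional you describe (hole term plus consecutive interactions, $\nabla a$ lower order) no stable critical point exists, and the argument would fail. A secondary remark: the paper takes $P_\ep$ to be the harmonic projection ($\Delta P_\ep w=\Delta w$ in $\ome$, $P_\ep w=0$ on $\pome$), not the $a$-weighted one you propose; either choice can be made to work, but the residual $\nabla\log a\cdot\nabla P_\ep U_i$ then has to be tracked explicitly in the error estimate, and its contribution to the expansion is exactly the leading $\nabla a$ term above.
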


It is clear that according to the previous discussion, by Theorem \ref{thm_main} we immediately deduce the following result concerning problem
 \eqref{p1}.

\begin{thm}
\label{main1} Assume $ 1\le\ell\le  N-   4.$ Then
for any integer $k$ there exists $\epsilon_{k}>0$ such that  for  any $\epsilon\in(0,\epsilon_{k}),$ problem
\eqref{p1} has a $\Theta$-invariant solution $v_{\epsilon}$ which
satisfies
\begin{equation*}
v_{\epsilon}(y)=\sum\limits_{i=1}^{k}(-1)^{i}\widetilde
{U}_{\delta_ i(\epsilon),\xi_ i(\epsilon)}(y)+o(1)\qquad\text{in
}H^1 (\mathcal{D}_\ep),
\end{equation*}
where  \begin{equation*}
\widetilde{U}_{\delta,\xi}(y^{1},\ldots,y^{m},z):=U_{\delta,\xi}(\left\vert
y^{1}\right\vert ,\ldots,\left\vert y^{m}\right\vert ,z),
\end{equation*}
the weights $\delta_i(\ep)  $ and the centers $\xi_i(\ep)$  satisfy
\begin{equation*}
\ep^{-{(n-2)+2(i-1) \over (n-1)+2(k-1)}} \delta_i(\ep) \to d_i > 0\ \hbox{and}\   \xi_i(\ep)\to\xi_0\quad \text{as}\ \ep \to 0.\end{equation*}
\ \end{thm}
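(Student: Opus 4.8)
The plan is to obtain Theorem~\ref{main1} as a direct consequence of Theorem~\ref{thm_main} via the reduction to $\Theta$-invariant functions already outlined in the Introduction. First I would make the correspondence between the two problems precise. A function $v$ on $\mathcal D_\epsilon$ is $\Theta$-invariant if and only if it has the form $v(y^1,\dots,y^m,z)=u(|y^1|,\dots,|y^m|,z)$ for a function $u$ defined on the projected domain; moreover, writing $\xi_0=(r_1,\dots,r_m,\zeta)$ with $r_i>0$, one checks that $\mathrm{dist}((y^1,\dots,y^m,z),\Gamma_\ell)=|(|y^1|,\dots,|y^m|,z)-\xi_0|$, so the profile $x:=(|y^1|,\dots,|y^m|,z)$ ranges exactly over $\Omega_\epsilon=\Omega\setminus B(\xi_0,\epsilon)$ as $y$ ranges over $\mathcal D_\epsilon$. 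Since $\overline\Omega\subset\{x_i>0,\ i=1,\dots,m\}$ by \eqref{omega}, the map $y\mapsto x$ is a smooth submersion near $\overline{\mathcal D_\epsilon}$, whence $u$ inherits from $v$ (and vice versa) membership in $C^2$, $H^1$ and $L^q$, and smoothness of $\partial\Omega$ transfers to $\mathcal D$ and $\mathcal D_\epsilon$.

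Next I would run the elementary change of variables for the equation. Expressing $\Delta v$ for a $\Theta$-invariant $v$ in the coordinates $(|y^1|,\dots,|y^m|,z)$ produces the extra first order terms $\sum_{i=1}^m (\ell_i/x_i)\,\partial_{x_i}u$, so $v$ solves \eqref{p1} with $q=2^*_{N,\ell}$ if and only if $u$ solves the weighted problem \eqref{p10} with $a(x)=x_1^{\ell_1}\cdots x_m^{\ell_m}$ and exponent $2^*_{n,0}$, $n=N-\ell$. The weight $a$ is a polynomial, hence $a\in C^2(\overline\Omega)$, and $\min_{\overline\Omega}a>0$ because $\overline\Omega$ is a compact subset of $\{x_i>0\}$; also $n=N-\ell\ge 4$ because $\ell\le N-4$, while $\xi_0\in\Omega$ and $\Omega_\epsilon=\Omega\setminus B(\xi_0,\epsilon)$. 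Thus all the hypotheses of Theorem~\ref{thm_main} are met for \eqref{eq_main} with this choice of $a$ and $\Omega$.

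Finally I would apply Theorem~\ref{thm_main}: for each $k\ge 1$ it yields $\epsilon_k>0$ and, for $\epsilon\in(0,\epsilon_k)$, a sign-changing solution $u_\epsilon=\sum_{i=1}^k(-1)^i U_{\delta_i(\epsilon),\xi_i(\epsilon)}+o(1)$ in $H^1(\Omega_\epsilon)$ with the stated rates for $\delta_i(\epsilon)$ and $\xi_i(\epsilon)\to\xi_0$. Setting $v_\epsilon(y^1,\dots,y^m,z):=u_\epsilon(|y^1|,\dots,|y^m|,z)$ produces a $\Theta$-invariant solution of \eqref{p1}, and $\widetilde U_{\delta,\xi}$ is by definition the $\Theta$-invariant function with profile $U_{\delta,\xi}$. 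To transfer the error term I would use that, for a $\Theta$-invariant $w$ with profile $u$,
\[\int_{\mathcal D_\epsilon}|\nabla w|^2\,dy=\left(\prod_{i=1}^m|\mathbb{S}^{\ell_i}|\right)\int_{\Omega_\epsilon}a(x)|\nabla u|^2\,dx,\qquad
\int_{\mathcal D_\epsilon}|w|^2\,dy=\left(\prod_{i=1}^m|\mathbb{S}^{\ell_i}|\right)\int_{\Omega_\epsilon}a(x)|u|^2\,dx,\]
so that, $a$ being bounded above and away from zero on $\overline\Omega$, the $H^1(\mathcal D_\epsilon)$ norm of a $\Theta$-invariant function is equivalent, uniformly in $\epsilon$, to the weighted $H^1(\Omega_\epsilon)$ norm of its profile; applied to $w=v_\epsilon-\sum_{i=1}^k(-1)^i\widetilde U_{\delta_i(\epsilon),\xi_i(\epsilon)}$ this turns the $o(1)$ for $u_\epsilon$ into the claimed $o(1)$ in $H^1(\mathcal D_\epsilon)$. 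I expect no genuine obstacle in this deduction — once Theorem~\ref{thm_main} is in hand it is bookkeeping; the substantive difficulty is Theorem~\ref{thm_main} itself, whose Lyapunov--Schmidt reduction around the tower $\sum(-1)^i U_{\delta_i,\xi_i}$ must be pushed far enough that the $k$ distinct concentration rates $\delta_i(\epsilon)\sim\epsilon^{((n-2)+2(i-1))/((n-1)+2(k-1))}$ emerge from a solvable system for $d_1,\dots,d_k$, which is precisely where the delicate expansions of the bubble--bubble interactions and of the interaction with the small hole $B(\xi_0,\epsilon)$ are required.
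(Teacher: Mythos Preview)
Your proposal is correct and follows exactly the approach the paper takes: the paper states Theorem~\ref{main1} as an immediate consequence of Theorem~\ref{thm_main} via the $\Theta$-invariant reduction described in the Introduction, and you have simply spelled out that deduction carefully (including the identification of $\mathrm{dist}(y,\Gamma_\ell)$ with $|x-\xi_0|$, the verification that $a(x)=x_1^{\ell_1}\cdots x_m^{\ell_m}$ satisfies the hypotheses with $n=N-\ell\ge 4$, and the norm equivalence that transfers the $o(1)$). There is nothing to add.
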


 The proof of Theorem \ref{thm_main} relies on a very well known Lyapunov-Schmidt reduction.  In particular, we will follow the arguments
used in \cite{MP2,GMP}.
We shall omit many details of the proof
because they can be found, up to some minor modifications, in   \cite{MP2,GMP}.
We only compute what cannot be deduced from known results.
The paper is arranged as follows. Section  2 contains the main steps of the proof of Theorem \ref{thm_main}.
In Section 3 and in  Section 4 we study   the reduced energy. Appendix is devoted to prove some technical results which are
necessary to perform the finite dimensional reduction.

\section{The scheme of the proof of Theorem \ref{thm_main}}
Let   $\home$ be the Sobolev space  endowed with the inner product    $(v_1, v_2)_{\ep} := \inte a(\nabla v_1 \cdot \nabla v_2)$ for $v_1, v_2 \in \home$.
Also, denote $\|v\|_{\ep} = ((v, v)_{\ep})^{1 \over 2}$ for all $v \in \home$.
 Let  $i_{\ep}: \home \hookrightarrow L^{2n \over n-2}(\ome)$ be the Sobolev embedding
and  let  $i_{\ep}^*: L^{2n \over n+2}(\ome) \to \home$ be its adjoint so that $i_{\ep}^*(v) = u$ if and only if $-\text{div}(a\nabla u) = av$ in $\ome$ and $u = 0$ on $\pome$.
Note that there exists a constant $C > 0$ which depends only on the dimension $n$ such that $\|i_{\ep}^*(v)\|_{\ep} \le C\|v\|_{L^{2n \over n+2}(\ome)}$ for any $v \in L^{2n \over n+2}(\ome)$.

For any given $w \in D^{1,2}(\rn)$,
we denote by $P_{\ep}w \in \home$ the unique solution of the Dirichlet problem $\Delta P_{\ep} w = \Delta w$ in $\ome$ and $P_{\ep}w = 0$ on $\pome.$

\medskip
 The solutions we are looking for have the form
\begin{equation}\label{eq_ansatz}
u_{\ep} = V_{\ep} + \phi,\quad V_{\ep}:= \sik (-1)^{i+1} P_{\ep}U_i,
\end{equation}
where  the bubble $U_i := U_{\delta_i, \xi_i}$ is given in \eqref{eq_udx} with
\begin{equation}\label{eq_parameter}
\delta_i := \ep^{(n-2)+2(i-1) \over (n-1)+2(k-1)}d_i, \quad \xi_i := \xi_0 + \delta_i\sigma_i \quad \text{for some } d_i > 0,\ \sigma_i \in \rn \quad (i = 1, \cdots, k)
\end{equation}
and $\phi$ is a remainder term which belongs to a suitable space defined as follows.

\medskip
It is well known that the  space of solutions of the linearized equation
\begin{equation}\label{eq_linearized}
-\Delta \psi = p \udx^{p-1} \psi \quad \text{in } \rn,\ \|\psi\|_{L^{\infty}(\rn)} < \infty
\end{equation}
is spanned by $(n+1)$ functions
\begin{equation}\label{eq_pdxn}
\pdxn(x) := {\partial \udx \over \partial \delta} = \alpha_n \({n-2 \over 2}\) \delta^{n-4 \over 2} {|x - \xi|^2 -\delta^2 \over (\delta^2 + |x - \xi|^2)^{n \over 2}}
\end{equation}
and
\begin{equation}\label{eq_pdxj}
\pdxj(x) := {\partial \udx \over \partial \xi_j} = \alpha_n (n-2) \delta^{n-2 \over 2} {x_j - \xi_j \over (\delta^2 + |x - \xi|^2)^{n \over 2}}
\end{equation}
for $j = 1, \cdots, n$.

We set $\psi_i^j = \psi_{\delta_i, \xi_i}^j$ with parameters as in \eqref{eq_parameter} and we define the subspaces of $\home$
\[K_{\ep} = \text{span}\big\{P_{\ep}\psi_i^j : i = 1, \cdots, k, ~j = 0, 1, \cdots, n \big\}\]
and
\begin{equation}\label{eq_space_perp}
K_{\ep}\pe = \left\{\phi \in \home: \big(\phi, P_{\ep}\psi_i^j\big)_{\ep} = 0 \text{ for } i = 1, \cdots, k, ~j = 0, 1, \cdots, n \right\}.
\end{equation}
We also introduce the projections $\Pi_{\ep}: H_0^1(\ome) \to K_{\ep}$ and $\Pi_{\ep}\pe: H_0^1(\ome) \to K_{\ep}\pe$. As it is usual in the Lyapunov-Schmidt procedure, solving problem \eqref{eq_main} is   equivalent to finding a function $\phi \in K_{\ep}\pe$ and parameters $\md := (d_1, \cdots, d_k) \in (0,\infty)^k$ and $\ms := (\sigma_1, \cdots, \sigma_k) \in \(\rn\)^k$ which solve the couple of equations
\begin{equation}\label{eq_main_auxiliary}
\Pi_{\ep}\pe\(V_{\ep} + \phi - i_{\ep}^*\(|V_{\ep}+\phi|^{4 \over n-2}(V_{\ep}+\phi)\)\) = 0
\end{equation}
and
\begin{equation}\label{eq_main_finite}
\Pi_{\ep}\(V_{\ep} + \phi - i_{\ep}^*\(|V_{\ep}+\phi|^{4 \over n-2}(V_{\ep}+\phi)\)\) = 0
\end{equation}
where $V_{\ep}$ is a function given in \eqref{eq_ansatz} which depends on $\ep,$ $\md$ and $\ms$.

\medskip
Firstly we solve equation \eqref{eq_main_auxiliary}.
  \begin{prop}\label{prop_auxiliary}
For any compact set $\mathcal C\subset (0,\infty)^k\times(\mathbb R^n)^k,$ there exists $\ep_0 > 0$ such that for each $\ep \in (0, \ep_0)$ and $(\md, \ms) \in \mathcal C$, equation \eqref{eq_main_auxiliary} possesses a unique solution $\phi_{\ep}^{\md, \ms} \in K_{\ep}$ satisfying
\begin{equation}\label{eq_auxiliary_1}
\left\|\phi_{\ep}^{\md, \ms}\right\|^2 = o\(\ep^{n-2 \over (n-1)+2(k-1)}\).
\end{equation}
Moreover, $({\md, \ms}) \mapsto \phi_{\ep}^{\md, \ms}$ is a $C^1$-map.
\end{prop}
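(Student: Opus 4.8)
The plan is to solve the infinite-dimensional equation \eqref{eq_main_auxiliary} by a contraction mapping argument in the space $K_\ep\pe$, following the standard Lyapunov--Schmidt scheme as carried out in \cite{MP2,GMP}. First I would rewrite \eqref{eq_main_auxiliary} as a fixed point problem for $\phi\in K_\ep\pe$. Writing $f(t)=|t|^{4/(n-2)}t$ and defining the linear operator
\[
L_\ep(\phi) := \Pi_\ep\pe\(\phi - i_\ep^*\(f'(V_\ep)\phi\)\),
\]
equation \eqref{eq_main_auxiliary} becomes $L_\ep(\phi) = N_\ep(\phi) + R_\ep$, where $R_\ep := \Pi_\ep\pe\(i_\ep^*(f(V_\ep)) - V_\ep\)$ is the error of the ansatz and $N_\ep(\phi) := \Pi_\ep\pe\(i_\ep^*\(f(V_\ep+\phi)-f(V_\ep)-f'(V_\ep)\phi\)\)$ is the quadratic remainder. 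The three ingredients I need are: (i) \emph{invertibility of $L_\ep$} uniformly in $\ep$ and in $(\md,\ms)\in\mathcal C$, i.e. $\|L_\ep(\phi)\|_\ep \ge c\|\phi\|_\ep$ for some $c>0$ independent of $\ep$; (ii) an \emph{estimate of the error} $\|R_\ep\|_\ep = O\(\ep^{(n-2)/(2((n-1)+2(k-1)))}\) \cdot o(1)$, or more precisely $\|R_\ep\|_\ep = o\(\ep^{(n-2)/(2((n-1)+2(k-1)))}\)$; and (iii) a \emph{Lipschitz bound} $\|N_\ep(\phi_1)-N_\ep(\phi_2)\|_\ep \le o(1)\|\phi_1-\phi_2\|_\ep$ on balls of radius $o\(\ep^{(n-2)/(2((n-1)+2(k-1)))}\)$, together with $\|N_\ep(\phi)\|_\ep = o(\|\phi\|_\ep)$ there. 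Granting these, the map $\phi\mapsto L_\ep^{-1}\(N_\ep(\phi)+R_\ep\)$ is a contraction of the ball $\{\|\phi\|_\ep\le C\|R_\ep\|_\ep\}$ into itself, producing the unique solution $\phi_\ep^{\md,\ms}$ with the stated bound \eqref{eq_auxiliary_1}; the $C^1$-dependence on $(\md,\ms)$ follows from the implicit function theorem applied to the $C^1$ map $(\phi,\md,\ms)\mapsto L_\ep(\phi)-N_\ep(\phi)-R_\ep$, using that the linearization in $\phi$ at the solution is a small perturbation of the invertible $L_\ep$.

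For step (i), the point is that the bubbles $P_\ep U_i$ concentrate at the single point $\xi_0$ but with \emph{mutually different rates} $\delta_i\sim\ep^{((n-2)+2(i-1))/((n-1)+2(k-1))}$, so the ratios $\delta_i/\delta_{i+1}\to 0$; this well-separation in scale is exactly what makes the kernels of the individual linearized operators ``almost orthogonal'' and forces the quadratic form $\langle L_\ep\phi,\phi\rangle_\ep$ to be coercive on $K_\ep\pe$. The argument is by contradiction: if $\|L_\ep\phi_\ep\|_\ep=o(1)\|\phi_\ep\|_\ep$ with $\|\phi_\ep\|_\ep=1$, one rescales around each concentration point, passes to the limit to get a bounded solution of the limit linearized equation \eqref{eq_linearized} that is orthogonal to all the $\psi^j$'s, hence zero, and then recovers a contradiction from the original normalization. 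This is essentially identical to \cite{MP2,GMP}, the only new feature being the presence of the weight $a(x)$, which is harmless since $a$ is $C^2$ and bounded away from $0$: near $\xi_0$ it is just a smooth positive multiplicative factor that converges to the constant $a(\xi_0)$ under the rescaling, and can be absorbed.

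For step (ii), the error estimate, I would split $R_\ep$ using $i_\ep^*$: since $-\mathrm{div}(a\nabla P_\ep U_i) = -\mathrm{div}(a\nabla U_i) = a\, U_i^{(n+2)/(n-2)} - (\nabla a\cdot\nabla U_i)$ in $\ome$ (up to the projection correcting the boundary values and the hole), one writes $i_\ep^*(f(V_\ep))-V_\ep$ as a sum of three types of terms: the interaction terms $f(V_\ep)-\sum_i f(\pm P_\ep U_i)$ coming from the nonlinearity not being additive, the terms $\sum_i (P_\ep U_i - U_i)$ measuring the difference between the projected and the standard bubble, and the terms involving $\nabla a$. The interaction terms are controlled by the classical pointwise estimates on products of bubbles at different scales; the projection-error terms are estimated by the maximum principle comparing $P_\ep U_i$ with $U_i$ on $\pome$, which splits into the outer boundary $\partial\Omega$ (distance $\gtrsim 1$ from $\xi_0$, contributing $O(\delta_i^{(n-2)/2})$) and the boundary of the removed ball $\partial B(\xi_0,\ep)$ (this is the genuinely new piece compared to \cite{MP2,GMP}, and contributes a term of order $(\delta_i/\ep)^{(n-2)/2}\cdot$ something or $(\ep/\delta_i)^{\cdots}$ depending on whether $\delta_i\ll\ep$ or $\delta_i\gg\ep$; this is why the hierarchy of rates in \eqref{eq_parameter} is chosen as it is); and the $\nabla a$ terms contribute $O(\delta_i)$. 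Summing and using the specific choice of exponents in \eqref{eq_parameter}, the dominant contribution works out to be $o\(\ep^{(n-2)/(2((n-1)+2(k-1)))}\)$, which is \eqref{eq_auxiliary_1}. I expect the main obstacle to be precisely this interplay, in step (ii), between the hole of size $\ep$ and the tower of bubbles at rates $\delta_i$: one must verify that the contribution of the projection correction coming from $\partial B(\xi_0,\ep)$ is genuinely of lower order than the leading term of the reduced energy for \emph{every} $i$, which is what dictates and justifies the precise balance of powers of $\ep$ in \eqref{eq_parameter}. Steps (i) and (iii), by contrast, are routine adaptations of the cited references once the weight $a$ is handled as above.
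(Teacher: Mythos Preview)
Your proposal is correct and follows essentially the same route as the paper's Appendix~\ref{sec_sketch}: reformulate \eqref{eq_main_auxiliary} as $L_\ep\phi=N_\ep(\phi)+R_\ep$, prove uniform coercivity of $L_\ep$ on $K_\ep\pe$ by the standard contradiction/rescaling argument (the paper uses Lemma~\ref{lemma_ortho} for the almost-orthogonality of the $P_\ep\psi_i^j$), estimate the error via the three-piece decomposition of Lemma~\ref{lemma_error}, and conclude by contraction mapping and the implicit function theorem. One minor correction: with the rates in \eqref{eq_parameter} one always has $\ep=o(\delta_i)$, so only the regime $\delta_i\gg\ep$ occurs, and the genuinely new ingredient relative to \cite{MP2,GMP} (which already treat the pierced domain) is the anisotropic weight $a$, not the hole.
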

\noindent
The proof is postponed in   Appendix \ref{sec_sketch}.

\medskip
Then, the problem reduces to find $(\md, \ms)$ which solves \eqref{eq_main_finite}. Notice that equation \eqref{eq_main} has a variational structure, namely, its solutions are critical points of the energy functional
\[I_{\ep}(u) = {1 \over 2}\inte a(x)|\nabla u|^2dx - {1 \over p+1} \inte a(x)|u|^{p+1}dx \quad \text{for } u \in \home.\]
We introduce the reduced energy
\[J_{\ep}(\md, \ms) := I_{\ep}\(V_{\ep}^{\md, \ms}  + \phi_{\ep}^{\md, \ms}\) \quad \text{for } (\md, \ms) \in (0,\infty)^k \times \(\rn\)^k\]
where the superscript of $V_{\ep}^{\md, \ms} := V_{\ep}$ (see \eqref{eq_ansatz}) emphasizes its dependence on $\md$ and $\ms$.
Next, we reduce the problem to  a finite dimensional one.
 \begin{prop}\label{prop_reduction}
If $(\md, \ms)$ is a critical point of $J_{\ep}$, then $V_{\ep}^{\md, \ms}  + \phi_{\ep}^{\md, \ms}$ is a solution of \eqref{eq_main}.
\end{prop}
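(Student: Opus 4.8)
The plan is to verify the standard Lyapunov–Schmidt principle: that critical points of the reduced functional $J_\ep$ correspond to genuine solutions of \eqref{eq_main}. First I would recall that, by Proposition \ref{prop_auxiliary}, for $(\md, \ms)$ in a fixed compact set $\mathcal C$ the function $\phi_\ep^{\md,\ms} \in K_\ep\pe$ solves \eqref{eq_main_auxiliary}, so the full equation \eqref{eq_main} (equivalently $V_\ep + \phi - i_\ep^*(|V_\ep+\phi|^{4/(n-2)}(V_\ep+\phi)) = 0$ in $\home$) is satisfied precisely when the $K_\ep$-component \eqref{eq_main_finite} also vanishes. Thus it suffices to show that if $(\md,\ms)$ is a critical point of $J_\ep$, then \eqref{eq_main_finite} holds at $(\md,\ms)$.

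The key step is to differentiate $J_\ep(\md,\ms) = I_\ep(V_\ep^{\md,\ms} + \phi_\ep^{\md,\ms})$ with respect to each of the $k(n+1)$ parameters $d_i$ and the components of $\sigma_i$. Using the $C^1$-dependence of $\phi_\ep^{\md,\ms}$ on $(\md,\ms)$ from Proposition \ref{prop_auxiliary} and the chain rule,
\[\partial_{(\md,\ms)} J_\ep(\md,\ms) = I_\ep'\(V_\ep^{\md,\ms} + \phi_\ep^{\md,\ms}\)\Big[\partial_{(\md,\ms)}\(V_\ep^{\md,\ms} + \phi_\ep^{\md,\ms}\)\Big].\]
Since $\phi_\ep^{\md,\ms} \in K_\ep\pe$, its derivative $\partial_{(\md,\ms)}\phi_\ep^{\md,\ms}$ decomposes into a part in $K_\ep\pe$ and a part in $K_\ep$ coming from the variation of the projection; meanwhile $\partial_{(\md,\ms)} V_\ep^{\md,\ms}$ is, up to lower-order terms, a combination of the $P_\ep\psi_i^j$ that spans $K_\ep$. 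By \eqref{eq_main_auxiliary}, $I_\ep'(V_\ep+\phi_\ep)$ annihilates $K_\ep\pe$ (this is exactly the statement that $\Pi_\ep\pe$ of the gradient vanishes), so the only surviving contribution to $\partial_{(\md,\ms)} J_\ep$ comes from pairing the gradient with the $K_\ep$-directions. One obtains a relation of the schematic form
\[\partial_{(\md,\ms)} J_\ep(\md,\ms) = \sum_{i,j} c_{ij}(\ep,\md,\ms)\,\Big(\text{gradient of } I_\ep \text{ at } V_\ep+\phi_\ep,\ P_\ep\psi_i^j\Big)_\ep + (\text{l.o.t.}),\]
where the coefficient matrix $\big(c_{ij}\big)$ is, for $\ep$ small, a small perturbation of an invertible matrix (the Gram-type matrix of the nearly-orthogonal functions $P_\ep\psi_i^j$, after suitable rescaling). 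Hence vanishing of the left-hand side forces the vanishing of all the inner products $(V_\ep+\phi_\ep - i_\ep^*(\cdots),\ P_\ep\psi_i^j)_\ep$, which is precisely \eqref{eq_main_finite}.

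The main obstacle — and the only genuinely quantitative point — is establishing that the coefficient matrix $\big(c_{ij}\big)$ is invertible uniformly in $\ep$. This requires knowing that the functions $P_\ep\psi_i^j$, suitably normalized, remain linearly independent in a quantitative way as $\ep \to 0$, even though the bubbles $U_i$ concentrate at the same point $\xi_0$ with comparable-in-logarithm but ordered scales $\delta_i$; the separation of the concentration rates in \eqref{eq_parameter} (the exponents $(n-2)+2(i-1)$ are strictly increasing) is what keeps the interactions subcritical and the matrix non-degenerate. The remaining estimates — controlling the l.o.t. using $\|\phi_\ep^{\md,\ms}\|^2 = o(\ep^{(n-2)/((n-1)+2(k-1))})$ from \eqref{eq_auxiliary_1} and the $C^1$ bound on $(\md,\ms)\mapsto\phi_\ep^{\md,\ms}$ — are routine and can be quoted almost verbatim from \cite{MP2,GMP}, so in the write-up I would state the derivative identity, invoke invertibility of the reduced linear map, and refer to those papers for the details.
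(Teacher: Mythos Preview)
Your proposal is correct and follows essentially the same route as the paper's proof. Both arguments differentiate $J_\ep$ in each parameter, use that $I_\ep'(V_\ep+\phi_\ep)$ annihilates $K_\ep^\perp$ by \eqref{eq_main_auxiliary}, and reduce the conclusion to the invertibility of the $k(n+1)\times k(n+1)$ matrix with entries $(P_\ep\psi_i^j,\partial_s V_\ep)_\ep - (\partial_s P_\ep\psi_i^j,\phi_\ep)_\ep$; the paper makes this invertibility concrete via Lemma~\ref{lemma_ortho}, the near-orthogonality estimate $(P\psi_i^j,P\psi_l^m)_\ep=\delta_{il}\delta_{jm}a(\xi_0)\tilde c_j\delta_i^{-2}+o(\delta_i^{-2})$, which is precisely the ``Gram-type matrix'' nondegeneracy you single out as the main quantitative point.
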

\noindent The proof is postponed in  Appendix \ref{sec_sketch}.

\medskip
Therefore, since the problem is reduced to looking for a critical point of the reduced energy $J_{\ep}$,
we need its asymptotic expansion.
\begin{prop}\label{prop_expansion}
Assume $n \ge 4$. It holds true that
 \begin{equation}\label{eq_expansion}
J_{\ep}(\md, \ms) = c_1k a(\xi_0) + \Psi(\md, \ms) \ep^{n-2 \over (n-1)+2(k-1)} + o\(\ep^{n-2 \over (n-1)+2(k-1)}\)
\end{equation}
$C^1$-uniformly in compact sets of $(0,\infty)^k \times \(\rn\)^k.$ Here
\begin{equation}\label{eq_psi_ep}
\Psi(\md, \ms) = c_2 \la \nabla a(\xi_0), \sigma_1 \ra d_1 + {c_3 a(\xi_0) \over (1+|\sigma_k|^2)^{n-2}} {1 \over d_k^{n-2}} + \sum\limits_{i=1}^{k-1} {c_4 a(\xi_0) \over (1+|\sigma_i|^2)^{n-2 \over 2}} \({d_{i+1} \over d_i}\)^{n-2 \over 2}
\end{equation}
and $c_1,\dots,c_4$ are positive constants.
\end{prop}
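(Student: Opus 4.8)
The plan is to expand $J_{\ep}(\md,\ms) = I_{\ep}(V_{\ep} + \phi_{\ep}^{\md,\ms})$ directly, using the fact that the error $\phi_{\ep}^{\md,\ms}$ is of smaller order than the main term, as quantified by \eqref{eq_auxiliary_1}. First I would show that $J_{\ep}(\md,\ms) = I_{\ep}(V_{\ep}) + o(\ep^{(n-2)/((n-1)+2(k-1))})$ in the $C^1$ sense; this is a standard consequence of $\phi_{\ep}^{\md,\ms} \in K_{\ep}\pe$, the fact that $I_{\ep}'(V_{\ep})[\phi]$ is small, and the quadratic nature of the remaining terms, combined with the $C^1$-dependence of $\phi_{\ep}^{\md,\ms}$ on the parameters asserted in Proposition \ref{prop_auxiliary}. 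So the real task reduces to expanding $I_{\ep}(V_{\ep})$ where $V_{\ep} = \sum_{i=1}^k (-1)^{i+1} P_{\ep}U_i$.

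Next, I would split $I_{\ep}(V_{\ep})$ into its quadratic part $\frac12 \inte a|\nabla V_{\ep}|^2$ and its nonlinear part $-\frac{1}{p+1}\inte a|V_{\ep}|^{p+1}$. For the quadratic part, expanding the square produces the diagonal terms $\frac12 \inte a|\nabla P_{\ep}U_i|^2$ and the cross terms $\pm \inte a \nabla P_{\ep}U_i \cdot \nabla P_{\ep}U_j$ for $i \neq j$. Using the defining equation for $P_{\ep}U_i$ and the equation $-\Delta U_i = U_i^p$, each such integral reduces to an integral of $a U_i^p$ against $P_{\ep}U_j$ (or $U_j$), which one estimates via the standard asymptotics of the projected bubbles: $P_{\ep}U_i = U_i - \varphi_i$ where $\varphi_i$ captures the boundary correction coming from $\pome = \partial\Omega \cup \partial B(\xi_0,\ep)$. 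The analysis here parallels \cite{MP2,GMP}, the new feature being the weight $a(x)$, which one Taylor-expands about $\xi_0$: $a(x) = a(\xi_0) + \la \nabla a(\xi_0), x - \xi_0\ra + O(|x-\xi_0|^2)$. The constant term $a(\xi_0)$ contributes $c_1 k\, a(\xi_0)$ (the $k$ copies of the Sobolev energy of a single bubble) plus, through the interaction of consecutive bubbles, the terms $c_4 a(\xi_0) (d_{i+1}/d_i)^{(n-2)/2}(1+|\sigma_i|^2)^{-(n-2)/2}$ and the boundary-hole interaction term $c_3 a(\xi_0) d_k^{-(n-2)}(1+|\sigma_k|^2)^{-(n-2)}$, after substituting the precise scaling $\delta_i = \ep^{((n-2)+2(i-1))/((n-1)+2(k-1))}d_i$ which is calibrated exactly so that all these contributions are of the common order $\ep^{(n-2)/((n-1)+2(k-1))}$. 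The gradient term $\la\nabla a(\xi_0), x - \xi_0\ra$ evaluated against $U_1^p$ (the widest, hence least concentrated, bubble) produces the term $c_2 \la\nabla a(\xi_0),\sigma_1\ra d_1$, using $\xi_i - \xi_0 = \delta_i\sigma_i$; contributions from this linear term against the narrower bubbles $U_i$, $i\ge 2$, are of strictly smaller order and absorbed into the remainder.

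The main obstacle I anticipate is bookkeeping the precise powers of $\ep$ for every interaction term and verifying that the chosen exponents $((n-2)+2(i-1))/((n-1)+2(k-1))$ make the leading-order contributions balance while everything else is genuinely $o(\ep^{(n-2)/((n-1)+2(k-1))})$; in particular one must check that the self-interaction boundary corrections $\varphi_i$ at $\partial\Omega$ contribute only to the $c_1 k\,a(\xi_0)$ term at this order (their finer effect being negligible since $\xi_0$ is interior), that the bubble–bubble interactions for non-consecutive indices $|i-j|\ge 2$ are of higher order, and that the Robin-function-type terms from $\partial\Omega$ do not enter at the order $\ep^{(n-2)/((n-1)+2(k-1))}$. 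Upgrading all these estimates from $C^0$ to $C^1$ uniformity on compact subsets of $(0,\infty)^k\times(\rn)^k$ — needed so that critical points of the expansion persist as critical points of $J_{\ep}$ — requires differentiating the bubbles and projections in $\md$ and $\ms$ and repeating the estimates for the derivatives, which is routine but lengthy; since these computations follow \cite{MP2,GMP} with only the addition of the smooth weight $a$, I would carry out in detail only the genuinely new pieces, namely the terms involving $a(\xi_0)$ and $\nabla a(\xi_0)$, and refer to those papers for the rest.
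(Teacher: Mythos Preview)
Your outline is correct and follows essentially the same route as the paper: reduce to $I_\ep(V_\ep)$ via the smallness of $\phi_\ep^{\md,\ms}$, split into the Dirichlet and potential parts, use the asymptotics of $P_\ep U_i$ on the pierced domain together with a Taylor expansion of $a$ at $\xi_0$, and read off the three contributions to $\Psi$ from the scaling \eqref{eq_parameter}; the paper likewise defers the bulk of the bookkeeping to \cite{MP2,GMP,KP2}.

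One point you gloss over is worth flagging because the paper singles it out as nontrivial. You write that after using $-\Delta U_i=U_i^p$ the quadratic term ``reduces to an integral of $aU_i^p$ against $P_\ep U_j$''. That is not quite true: since the operator is $-\mathrm{div}(a\nabla\,\cdot\,)$, integration by parts yields
\[
\inte a\,\nabla P_\ep U_i\cdot\nabla P_\ep U_l \;=\; \inte a\,U_i^{p}\,P_\ep U_l \;-\; \inte (\nabla a\cdot\nabla P_\ep U_i)\,P_\ep U_l,
\]
and the extra term $\inte (\nabla a\cdot\nabla P_\ep U_i)\,P_\ep U_l$ does not fall out for free. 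The paper remarks explicitly that the ``obvious'' Young-inequality bound used in \cite{ACP,KP2} gives only $O(\delta_1^{1-\eta})$, which is \emph{not} $o(\delta_1)=o(\ep^{(n-2)/((n-1)+2(k-1))})$; a more careful direct computation (their Lemma~\ref{lemma_expansion_3}, using the pointwise expansion of $P_\ep U_i$ from Lemma~\ref{lemma_expansion_1} and the assumption $n\ge 4$) is required to show this cross term is genuinely lower order. The same care is needed for the $C^1$ estimate, where the analogous integral $\inte(\nabla a\cdot\nabla P_\ep U_i)\,(\delta_l P_\ep\psi_l^j)$ actually \emph{contributes} the term $c_2\,\partial_{x_j}a(\xi_0)\,\delta_{i1}\delta_{l1}$ at leading order. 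So when you say you will ``carry out in detail only the genuinely new pieces involving $a(\xi_0)$ and $\nabla a(\xi_0)$'', be sure to include this one.
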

\noindent The proof is postponed in  Section \ref{sec_expansion}.

The last step consists in showing that the leading term of the reduced energy has a critical point which is stable under $C^1-$perturbation.
 \begin{prop}\label{prop_critical}
$\Psi$ has a nondegenerated critical point.
\end{prop}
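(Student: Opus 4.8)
The plan is to peel off the variables in which $\Psi$ is trivially stationary and reduce everything to a one–dimensional question. Write $\beta:=\tfrac{n-2}{2}\ge 1$ and $a_0:=a(\xi_0)>0$, and assume $\nabla a(\xi_0)\neq 0$ (the case of interest; for $a(x)=x_1^{\ell_1}\cdots x_m^{\ell_m}$ it holds on $\overline{\Omega}$). Set $e:=\nabla a(\xi_0)/|\nabla a(\xi_0)|$ and decompose $\sigma_1=se+w$ with $s\in\rr$, $w\perp e$. For $i=2,\dots,k$ the variable $\sigma_i$ enters $\Psi$ only through a factor $(1+|\sigma_i|^2)^{-\gamma_i}$ ($\gamma_i=\beta$ if $i\le k-1$, $\gamma_k=2\beta$) multiplied by a positive quantity, and $w$ enters $\Psi$ only through $(1+s^2+|w|^2)^{-\beta}$ times a positive quantity; hence at any critical point $\sigma_2=\dots=\sigma_k=0$ and $w=0$. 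Since $\partial_{\sigma_i}\Psi$ is a scalar times $\sigma_i$ and $\partial_w\Psi$ is a scalar times $w$, every mixed second derivative of $\Psi$ coupling $\{w,\sigma_2,\dots,\sigma_k\}$ with the remaining $k+1$ variables $(\md,s)$ vanishes at such a point; thus the full Hessian of $\Psi$ there is block diagonal, the $w$–block and each $\sigma_i$–block ($i\ge 2$) being negative definite, and it remains to find a nondegenerate critical point of
\[
\widehat\Psi(\md,s)\ :=\ c_2\,|\nabla a(\xi_0)|\,d_1 s\ +\ \frac{c_3 a_0}{d_k^{\,2\beta}}\ +\ \frac{c_4 a_0}{(1+s^2)^{\beta}}\Big(\frac{d_2}{d_1}\Big)^{\!\beta}\ +\ \sum_{i=2}^{k-1} c_4 a_0 \Big(\frac{d_{i+1}}{d_i}\Big)^{\!\beta}
\]
on $(0,\infty)^k\times(0,\infty)$.

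Next I would freeze $s>0$ and examine $\md\mapsto\widehat\Psi(\md,s)$. In the variables $d_i=e^{t_i}$ each summand becomes a positive multiple of $e^{t_1}$, of $e^{-2\beta t_k}$, or of $e^{\beta(t_{i+1}-t_i)}$, so $\widehat\Psi(\cdot,s)$ is convex in $\mt=(t_1,\dots,t_k)\in\rr^k$; it is strictly convex because the common kernel of the Hessians of the summands is cut out by $t_1=0$, $t_k=0$ and $t_i=t_{i+1}$ ($1\le i\le k-1$), hence is $\{0\}$. It is also coercive on $\rr^k$: the only $v$ with $v_1\le 0$, $v_{i+1}\le v_i$ for $1\le i\le k-1$, and $v_k\ge 0$ is $v=0$, which says precisely that no nonzero direction keeps all the linear exponents $\le 0$. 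Therefore $\widehat\Psi(\cdot,s)$ has a unique minimiser $\overline\md(s)\in(0,\infty)^k$, with positive definite Hessian, depending smoothly on $s$ by the implicit function theorem; set $\phi(s):=\widehat\Psi(\overline\md(s),s)$.

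The heart of the matter is computing $\phi'$. Since $\nabla_{\md}\widehat\Psi(\overline\md(s),s)=0$ one has $\phi'(s)=\partial_s\widehat\Psi(\overline\md(s),s)$; and the equation $\partial_{d_1}\widehat\Psi=0$ reads $c_2|\nabla a(\xi_0)|\,s=\beta c_4 a_0(1+s^2)^{-\beta}d_1^{-\beta-1}d_2^{\beta}$, i.e. $(d_2/d_1)^{\beta}=\dfrac{c_2|\nabla a(\xi_0)|\,s(1+s^2)^{\beta}}{\beta c_4 a_0}\,d_1$ at $\md=\overline\md(s)$. Substituting this into $\partial_s\widehat\Psi=c_2|\nabla a(\xi_0)|d_1-2\beta c_4 a_0 s(1+s^2)^{-\beta-1}(d_2/d_1)^{\beta}$ and simplifying gives the remarkably clean formula
\[
\phi'(s)\ =\ c_2\,|\nabla a(\xi_0)|\,\overline d_1(s)\,\frac{1-s^2}{1+s^2}.
\]
Since $\overline d_1(s)>0$, $\phi$ has a \emph{unique} critical point, $s=1$, and, because $\frac{d}{ds}\big|_{s=1}\frac{1-s^2}{1+s^2}=-1$, we get $\phi''(1)=-c_2|\nabla a(\xi_0)|\,\overline d_1(1)\neq 0$.

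It remains to assemble the pieces. The point $(\overline\md(1),1)$ is a critical point of $\widehat\Psi$, hence — together with $\sigma_1=e$ and $\sigma_2=\dots=\sigma_k=0$ — a critical point of $\Psi$. Splitting the $(k+1)\times(k+1)$ Hessian of $\widehat\Psi$ at $(\overline\md(1),1)$ into its $\md$– and $s$–parts, its Schur complement relative to the (positive definite, hence invertible) $\md$–block is exactly $\phi''(1)$ — this is the standard identity obtained by differentiating $\nabla_{\md}\widehat\Psi(\overline\md(s),s)\equiv 0$ in $s$ — so $\det\operatorname{Hess}\widehat\Psi(\overline\md(1),1)=\det\!\big(\operatorname{Hess}_{\md}\widehat\Psi|_{\overline\md(1)}\big)\cdot\phi''(1)\neq 0$. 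Combined with the block–diagonal structure of the first step and the nondegeneracy of the $w$– and $\sigma_i$–blocks, $\operatorname{Hess}\Psi$ is nonsingular at this point, so $\Psi$ has a nondegenerate critical point (of Morse index $1+(n-1)+(k-1)n=kn$). The only steps that take genuine care are the convexity/coercivity of $\widehat\Psi(\cdot,s)$ ensuring that $\overline\md(s)$ is well defined and smooth, and the bookkeeping that decouples the Hessian; everything else reduces to the one–variable computation above, whose punch line is the closed form for $\phi'$.
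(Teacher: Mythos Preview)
Your argument is correct and reaches the same critical point as the paper, $\sigma_1=\nabla a(\xi_0)/|\nabla a(\xi_0)|$, $\sigma_2=\cdots=\sigma_k=0$, but the route and especially the nondegeneracy proof are genuinely different. The paper performs the telescoping substitution $d_i=t_1\cdots t_i$, solves the system $\partial_{t_i}\widetilde\Psi=0$ \emph{explicitly} for each fixed $\ms$, obtains a closed-form reduced energy $\widehat\Psi(\ms)$ as a product of powers of $\langle\nabla a(\xi_0),\sigma_1\rangle/(1+|\sigma_1|^2)$ and $\prod_{i\ge 2}(1+|\sigma_i|^2)^{-1}$, and then reads off the maximiser $\ms_0$; nondegeneracy is then established by writing the full Hessian of $\widetilde\Psi$ at $(\mt(\ms_0),\ms_0)$ as an explicit block matrix, taking a Schur complement, and evaluating the resulting $k\times k$ determinant by induction on $k$. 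Your approach instead first peels off the variables $w,\sigma_2,\dots,\sigma_k$ by a clean block-diagonalisation argument, then uses log-convexity and coercivity of $\widehat\Psi(\cdot,s)$ to get the minimiser $\overline\md(s)$ without solving for it, and finally exploits only the single equation $\partial_{d_1}\widehat\Psi=0$ to derive the closed form $\phi'(s)=c_2|\nabla a(\xi_0)|\,\overline d_1(s)\,(1-s^2)/(1+s^2)$; nondegeneracy follows from the standard Schur-complement identity $\phi''=\partial_{ss}-\partial_{s\md}(\partial_{\md\md})^{-1}\partial_{\md s}$, with no explicit matrices at all. The paper's method yields explicit formulas for all the $t_i$'s and the value $\widehat\Psi(\ms)$, which can be useful elsewhere; your method is shorter and more conceptual, sidestepping the somewhat laborious determinant computation and the induction, and it additionally identifies the Morse index as $kn$.
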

\noindent The proof is postponed in   Section \ref{sec_critical}.

Finally, we have all the ingredients to complete the proof of Theorem  \ref{thm_main}.

\begin{proof}[Proof of Theorem \ref{thm_main}]
By Proposition \ref{prop_critical} and Proposition \ref{prop_expansion} it follows that $J_{\ep}$ has a critical point provided    $\ep  $ is small enough.
By Proposition \ref{prop_reduction} the claim immediately follows.
\end{proof}
\section{Expansion of the energy of approximate solutions}\label{sec_expansion}
The purpose of this section is to provide the proof of Proposition \ref{prop_expansion}.
We start this section by recalling the lemma \cite[Lemma 3.1]{GMP}, which describes the difference between $\udx$ and its projection $P_\ep \udx$ onto $H_0^1(\ome)$. Denote by $G(x,y)$ the Green function associated to $-\Delta$ with Dirichlet boundary condition and $H(x,y)$ its regular part, that is, let $G(x,y)$ and $H(x,y)$ be functions defined by
\[\left\{{\setlength\arraycolsep{2pt}\begin{array}{rll}
-\Delta_x G(x,y) &= \delta_y (x) &\quad \text{for } x \in \Omega,\\
G(x,y) &= 0 & \quad \text{for } x \in \partial\Omega,
\end{array}} \right.\]
and
\[G(x,y) = \gamma_n \( \frac{1}{|x-y|^{n-2}} - H(x,y) \) \quad \text{ where} \quad \gamma_n = \frac{1}{(n-2)|S^{n-1}|}\]
where $|S^{n-1}| = (2\pi^{n/2})/ \Gamma(n/2)$ is the Lebesgue measure of the $(n-1)$-dimensional unit sphere.

\begin{lemma}\label{lemma_expansion_1}
Given $\delta > 0$ and $\xi \in \ome$ such that $\ep = o(\delta)$ as $\ep \to 0$ and $|\xi - \xi_0| \le c \delta$ for some $c > 0$ fixed, the following expansions are valid.
\begin{align*}
\udx(x) &= P_\ep \udx(x) + \alpha_n \delta^{n-2 \over 2} H(x,\xi) +
\alpha_n{\delta^{n-2 \over 2} \over (\delta^2+|\xi-\xi_0|^2)^{n-2 \over 2}}{\ep^{n-2} \over |x-\xi_0|^{n-2}}\\
&\hspace{130pt} +\delta^{n-2 \over 2} \cdot O\(\left\{\ep^{n-2} + \bigg({\ep \over \delta}\bigg)^{n-1}\right\}{1 \over |x-\xi_0|^{n-2}} + \delta^2 + \bigg({\ep \over \delta}\bigg)^{n-2}\),\\
\pdxn(x) &= P_\ep \pdxn(x) + \alpha_n \({n-2 \over 2}\) \delta^{n-4 \over 2} H(x,\xi) +
\alpha_n\({n-2 \over 2}\) \delta^{n-4 \over 2} {|\xi-\xi_0|^2 - \delta^2 \over (\delta^2+|\xi-\xi_0|^2)^{n \over 2}}{\ep^{n-2} \over |x-\xi_0|^{n-2}}\\
&\hspace{130pt} +\delta^{n-4 \over 2} \cdot O\(\left\{\ep^{n-2} + \bigg({\ep \over \delta}\bigg)^{n-1}\right\}{1 \over |x-\xi_0|^{n-2}} + \delta^2 + \bigg({\ep \over \delta}\bigg)^{n-2}\)
\intertext{and}
\pdxj(x) &= P_\ep \pdxj(x) + \alpha_n \delta^{n-2 \over 2} {\partial H \over \partial \xi_j} (x,\xi) -
\alpha_n (n-2) \delta^{n-2 \over 2} {(\xi-\xi_0)_j \over (\delta^2+|\xi-\xi_0|^2)^{n \over 2}}{\ep^{n-2} \over |x-\xi_0|^{n-2}}\\
&\hspace{160pt} +\delta^{n-2 \over 2} \cdot O\(\left\{\ep^{n-2} + {\ep^{n-1} \over \delta^n}\right\}{1 \over |x-\xi_0|^{n-2}} + \delta^2 + {\ep^{n-2} \over \delta^{n-1}}\)
\end{align*}
for any $x \in \ome$ and $j = 1, \cdots, n$. In particular,
\begin{equation}\label{eq_expansion_1_1}
\big|\pdxn - P_\ep \pdxn\big| = O\(\delta^{n-4 \over 2} + {\ep^{n-2} \over \delta^{n \over 2} |x-\xi_0|^{n-2}}\), \quad \big|\pdxj - P_\ep \pdxj\big| = O\(\delta^{n-2 \over 2} + {\ep^{n-2} \over \delta^{n \over 2} |x-\xi_0|^{n-2}}\)
\end{equation}
for $j = 1, \cdots, n$ and
\[|\udx - P_\ep \udx| = O\(\delta^{n-2 \over 2} + {\ep^{n-2} \over \delta^{n-2 \over 2} |x-\xi_0|^{n-2}}\), \quad x \in \ome.\]
\end{lemma}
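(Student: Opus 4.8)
The plan is to establish all three displayed identities, and then the three pointwise bounds, by one maximum-principle argument. Since $\Delta P_\ep w=\Delta w$ in $\ome$ and $P_\ep w=0$ on $\pome$, the differences
\[\varphi := \udx - P_\ep\udx,\qquad \varphi^0 := \pdxn - P_\ep\pdxn,\qquad \varphi^j := \pdxj - P_\ep\pdxj\]
are harmonic in $\ome$ and, on $\pome=\partial\Omega\cup\partial B(\xi_0,\ep)$, they coincide with $\udx$, $\pdxn=\partial_\delta\udx$ and $\pdxj=\partial_{\xi_j}\udx$ respectively. (Differentiating $\Delta P_\ep\udx=\Delta\udx$ in $\delta$ and in $\xi_j$ and invoking uniqueness for the Dirichlet problem gives $P_\ep\pdxn=\partial_\delta(P_\ep\udx)$ and $P_\ep\pdxj=\partial_{\xi_j}(P_\ep\udx)$, hence $\varphi^0=\partial_\delta\varphi$ and $\varphi^j=\partial_{\xi_j}\varphi$; but it is cleaner to argue on each of $\varphi,\varphi^0,\varphi^j$ directly than to differentiate remainder terms.)

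Next I would subtract off the explicit main terms. For the first expansion set
\[\varphi_*(x):=\alpha_n\delta^{n-2 \over 2}H(x,\xi)+\alpha_n{\delta^{n-2 \over 2}\over(\delta^2+|\xi-\xi_0|^2)^{n-2 \over 2}}\,{\ep^{n-2}\over|x-\xi_0|^{n-2}},\]
and for the other two take $\partial_\delta\varphi_*$ and $\partial_{\xi_j}\varphi_*$; a direct differentiation shows that these are precisely the explicit terms appearing in the statement. Each of these comparison functions is harmonic in $\ome$: the $H$-term because $H(\cdot,\xi)$ is harmonic in $\Omega\supset\ome$, and the other term because it is a constant multiple of the fundamental solution with pole at $\xi_0\notin\ome$. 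Thus $\varphi-\varphi_*$ (and likewise $\varphi^0-\partial_\delta\varphi_*$, $\varphi^j-\partial_{\xi_j}\varphi_*$) is harmonic in $\ome$, and the proof reduces to estimating it on the two components of $\pome$.

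The substance is the boundary estimates. On $\partial\Omega$ one uses $G(\cdot,\xi)=0$ there, i.e. $H(x,\xi)=|x-\xi|^{2-n}$ (and correspondingly $\partial_{\xi_j}H(x,\xi)=\partial_{\xi_j}|x-\xi|^{2-n}$), together with the expansion $(\delta^2+|x-\xi|^2)^{-(n-2)/2}=|x-\xi|^{2-n}(1+O(\delta^2))$, valid because $|x-\xi|$ is bounded away from zero on $\partial\Omega$; the bubble part then contributes an error of order $\delta^{n-2 \over 2}\,O(\delta^2)$ for $\udx$ and $\delta^{n-4 \over 2}\,O(\delta^2)$ for $\pdxn$, while the pole term of $\varphi_*$ is of order $\delta^{-(n-2)/2}\ep^{n-2}=\delta^{n-2 \over 2}(\ep/\delta)^{n-2}$ there. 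On $\partial B(\xi_0,\ep)$, where $|x-\xi_0|=\ep$, one Taylor-expands $\udx$, $\pdxn$ and $\pdxj$ about $x=\xi_0$ using $|x-\xi|^2=|\xi-\xi_0|^2+O(\ep\delta)$: the value of $\udx$ there is $\alpha_n\delta^{n-2 \over 2}(\delta^2+|\xi-\xi_0|^2)^{-(n-2)/2}(1+O(\ep/\delta))$, which $\varphi_*$ matches to leading order on that sphere, leaving a remainder of order $\delta^{-n/2}\ep$ — this is exactly the $(\ep/\delta)^{n-1}|x-\xi_0|^{2-n}$ contribution of the stated $O$-term evaluated on $\partial B(\xi_0,\ep)$ — plus the term $\alpha_n\delta^{n-2 \over 2}H(x,\xi)$ of order $\delta^{n-2 \over 2}$, which is the $\ep^{n-2}|x-\xi_0|^{2-n}$ contribution there, $H$ being smooth near the interior point $\xi_0$. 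Writing $M_1,M_2$ for these bounds on $\partial\Omega$ and $\partial B(\xi_0,\ep)$, one finishes by comparing the harmonic function $\varphi-\varphi_*$ with the harmonic barrier $M_1+M_2\,\ep^{n-2}|x-\xi_0|^{2-n}$, which dominates $|\varphi-\varphi_*|$ everywhere on $\pome$; the maximum principle then yields the bound on $\ome$, and reading off $M_1$ and $M_2$ reproduces exactly the stated error $\delta^{n-2 \over 2}\,O(\{\ep^{n-2}+(\ep/\delta)^{n-1}\}|x-\xi_0|^{2-n}+\delta^2+(\ep/\delta)^{n-2})$ (and the analogous one with $\delta^{n-4 \over 2}$ for $\pdxn$). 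The pointwise bounds in \eqref{eq_expansion_1_1} and the one for $|\udx-P_\ep\udx|$ then follow by folding the explicit $H$- and $|x-\xi_0|^{2-n}$-terms into a single $O$, using $|H|+|\partial_{\xi_j}H|\le C$ on compact subsets of $\Omega$ and $\delta^2+|\xi-\xi_0|^2\sim\delta^2$.

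The only real obstacle I anticipate is the bookkeeping: the three boundary pieces — the bubble Taylor remainder on $\partial\Omega$, the size of the pole term on $\partial\Omega$, and the bubble Taylor remainder on $\partial B(\xi_0,\ep)$ — must be carried through the derivatives $\partial_\delta$ and $\partial_{\xi_j}$, which shift powers of $\delta$, and one must check that after multiplying by $\delta^{(n-2)/2}$ or $\delta^{(n-4)/2}$ every resulting term is dominated by the factor $\{\ep^{n-2}+(\ep/\delta)^{n-1}\}|x-\xi_0|^{2-n}+\delta^2+(\ep/\delta)^{n-2}$; this is precisely where the hypotheses $\ep=o(\delta)$ and $|\xi-\xi_0|\le c\delta$ enter. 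Since, apart from the explicit hole term, this scheme and these computations coincide with those of \cite[Lemma 3.1]{GMP}, I would display only the contributions of the inner boundary $\partial B(\xi_0,\ep)$ and refer to \cite{GMP} for the rest.
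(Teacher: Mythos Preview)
Your proposal is correct and matches the paper's approach: the paper does not prove this lemma at all but simply recalls it verbatim from \cite[Lemma 3.1]{GMP}, and the maximum-principle/comparison argument you outline is exactly the one used there. Your sketch in fact gives more detail than the paper, which contains no proof of this statement.
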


Let us choose $\rho > 0$ so small that $\overline{B(\xi_0, \rho)} \subset \Omega$. Also, introduce the annulus $A_l$ whose definition is given by
\begin{equation}\label{eq_annulus}
A_l = B\(\xi_0, \sqrt{\delta_{l-1}\delta_l}\) \setminus B\(\xi_0, \sqrt{\delta_l\delta_{l+1}}\) \quad \text{with } \delta_0 = {\rho^2 \over \delta_1} \text{ and } \delta_{k+1} = {\ep^2 \over \delta_k}
\end{equation}
for $l = 1, \cdots, k$. Then clearly
\[\ome = \(\bigcup_{l=1}^k A_l\) \cup \(\ome \setminus B(\xi_0, \rho)\).\]
As a consequence of Lemma \ref{lemma_expansion_1}, we can obtain
\begin{lemma}\label{lemma_expansion_2}
For any $i, l = 1, \cdots, k$, $i \ne l$, the following estimations are satisfied.
\begin{align*}
&(i) \intal aU_l^{2n \over n-2} = a(\xi_0) \intal U_l^{2n \over n-2} + \intal (a-a(\xi_0)) U_l^{2n \over n-2}\\
&\ \quad = (n(n-2))^{n \over 2} \(a(\xi_0) + \delta_{l1} \la \nabla a(\xi_0), \sigma_1 \ra d_1\ep^{n-2 \over (n-1)+2(k-1)}\) \int_{\rn} {dy \over (1 + |y|^2)^n} + o\(\ep^{n-2 \over (n-1)+2(k-1)}\);\\
&(ii) \intal U_l^{n+2 \over n-2}(PU_l - U_l) \\
&\ \quad = - (n(n-2))^{n \over 2} |B_n| \cdot {\delta_{lk} \over (1+|\sigma_k|^2)^{n-2}} \ep^{n-2 \over (n-1)+2(k-1)} + o\(\ep^{n-2 \over (n-1)+2(k-1)}\) = O\bigg(\delta_l^{n-2} + \({\ep \over \delta_l}\)^{n-2}\bigg);\\
&(iii) \intal U_l^{n+2 \over n-2}U_i\\
&\ \quad = (n(n-2))^{n \over 2} |B_n| \cdot \left[{\delta_{i(l+1)} \over (1+|\sigma_l|^2)^{n-2 \over 2}} \({d_{l+1} \over d_l}\)^{n-2 \over 2} + {\delta_{i(l-1)}  \over (1+|\sigma_{l-1}|^2)^{n-2 \over 2}} \({d_{l+1} \over d_l}\)^{n-2 \over 2} \right] \ep^{n-2 \over (n-1)+2(k-1)} \\
&\quad \quad + o\(\ep^{n-2 \over (n-1)+2(k-1)}\);\\
&(iv) \intal |PU_l-U_l|^{2n \over n-2},\ \intal U_i^{2n \over n-2} = O\(\ep^{n \over (n-1)+2(k-1)}\);\\
&(v) \int_{\ome \setminus A_l} a U_l^{p+1},\ \inte a U_l^p (PU_i -U_i),\ \int_{\ome \setminus A_l} a U_l^p U_i = o\(\ep^{n-2 \over (n-1)+2(k-1)}\);\\
&(vi) \inte (a - a(\xi_0)) U_l^{n+2 \over n-2}(PU_l - U_l),\ \inte (a - a(\xi_0))U_l^{n+2 \over n-2}U_i = o\(\ep^{n-2 \over (n-1)+2(k-1)}\)
\end{align*}
where $\delta_{lk}$ is the Kronecker delta and $|B_n| = \pi^{n/2}/\Gamma(n/2+1)$ denotes the volume of the $n$-dimensional unit ball.
\end{lemma}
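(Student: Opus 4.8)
To prove Lemma~\ref{lemma_expansion_2}, the plan is to derive every estimate from Lemma~\ref{lemma_expansion_1}, the scaling relations~\eqref{eq_parameter}, the definition~\eqref{eq_annulus} of the annuli, and two classical integral identities for the standard bubble $U_{1,0}(y)=\alpha_n(1+|y|^2)^{-{n-2 \over 2}}$. From $-\Delta U_{1,0}=U_{1,0}^{n+2 \over n-2}$ and the fact that $\gamma_n|y|^{-(n-2)}$ is the fundamental solution of $-\Delta$ on $\rn$, one gets $\int_{\rn}U_{1,0}^{n+2 \over n-2}(y)\,|y-z|^{-(n-2)}\,dy=\gamma_n^{-1}U_{1,0}(z)$ and, letting $|z|\to\infty$, $\int_{\rn}U_{1,0}^{n+2 \over n-2}=\gamma_n^{-1}\alpha_n$; note $\alpha_n^2\gamma_n^{-1}=(n(n-2))^{n/2}|B_n|$. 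Write $D:=(n-1)+2(k-1)$, so $\delta_l=\ep^{((n-2)+2(l-1))/D}d_l$. The organising point is that on $A_l$ the bubble $U_l$ lives at its own scale $\delta_l$, while on the bulk $\{|x-\xi_l|\lesssim\delta_l\}$ of $U_l$ every $U_i$ with $i\ne l$ is either essentially constant, $U_i\simeq\alpha_n\delta_i^{-{n-2 \over 2}}(1+|\sigma_i|^2)^{-{n-2 \over 2}}$ (if $i<l$, since $\delta_i\gg\delta_l$), or essentially harmonic, $U_i(x)\simeq\alpha_n\delta_i^{n-2 \over 2}|x-\xi_0|^{-(n-2)}$ (if $i>l$, since $\delta_i\ll\delta_l$); moreover, after $x=\xi_l+\delta_l y$ the rescaled annulus $\{|y+\sigma_l|\in(\sqrt{\delta_{l+1}/\delta_l},\sqrt{\delta_{l-1}/\delta_l})\}$ exhausts $\rn$ with tails which are powers of $\ep$ of exponent $>(n-2)/D$.

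For $(i)$ I rescale by $x=\xi_l+\delta_l y$, turning $\intal U_l^{2n \over n-2}$ into $(n(n-2))^{n/2}\int_{\rn}(1+|y|^2)^{-n}\,dy$ up to a tail, and Taylor expand $a(x)=a(\xi_0)+\la\nabla a(\xi_0),\delta_l(y+\sigma_l)\ra+O(\delta_l^2|y+\sigma_l|^2)$: the $y$-linear term vanishes by oddness, the $\sigma_l$-linear term gives $(n(n-2))^{n/2}\delta_l\la\nabla a(\xi_0),\sigma_l\ra\int_{\rn}(1+|y|^2)^{-n}\,dy$, and the remainder is $O(\delta_l^2)$; since $\delta_l$ and $\delta_l^2$ are $o(\ep^{(n-2)/D})$ for $l\ge2$, only $l=1$ contributes at the relevant order, which is the stated Kronecker-delta structure. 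For $(ii)$ I substitute $PU_l-U_l$ from Lemma~\ref{lemma_expansion_1}: paired with $U_l^{n+2 \over n-2}$, the $H(x,\xi_l)$-term gives $O(\delta_l^{n-2})=o(\ep^{(n-2)/D})$ (here $n\ge4$ enters, via $(n-2)+2(l-1)>1$), whereas the $\ep^{n-2}|x-\xi_0|^{-(n-2)}$ term, using $|\xi_l-\xi_0|=\delta_l|\sigma_l|$ and the first identity with $z=-\sigma_l$, equals $-\alpha_n^2\gamma_n^{-1}(\ep/\delta_l)^{n-2}(1+|\sigma_l|^2)^{-(n-2)}$ up to a tail; as $(\ep/\delta_l)^{n-2}=\ep^{(n-2)(1+2(k-l))/D}d_l^{-(n-2)}$ is of order $\ep^{(n-2)/D}$ precisely for $l=k$, this yields the claimed leading term (and the crude $O(\delta_l^{n-2}+(\ep/\delta_l)^{n-2})$ holds for every $l$). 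For $(iii)$ I insert the above local descriptions of $U_i$: the $i<l$ case reduces the integral to $\alpha_n\delta_i^{-{n-2 \over 2}}(1+|\sigma_i|^2)^{-{n-2 \over 2}}\intal U_l^{n+2 \over n-2}\sim\alpha_n^2\gamma_n^{-1}(\delta_l/\delta_i)^{n-2 \over 2}(1+|\sigma_i|^2)^{-{n-2 \over 2}}$, the $i>l$ case to $\alpha_n\delta_i^{n-2 \over 2}\intal U_l^{n+2 \over n-2}|x-\xi_0|^{-(n-2)}\sim\alpha_n^2\gamma_n^{-1}(\delta_i/\delta_l)^{n-2 \over 2}(1+|\sigma_l|^2)^{-{n-2 \over 2}}$; the resulting power of $\delta_l/\delta_i$ (resp.\ $\delta_i/\delta_l$) is of order $\ep^{(n-2)/D}$ exactly when $i=l\pm1$, all other $i$ being $o(\ep^{(n-2)/D})$, and $\alpha_n^2\gamma_n^{-1}=(n(n-2))^{n/2}|B_n|$ fixes the constant.

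Items $(iv)$–$(vi)$ are crude upper bounds. For $(iv)$ I plug $|PU_l-U_l|=O(\delta_l^{n-2 \over 2}+\ep^{n-2}\delta_l^{-{n-2 \over 2}}|x-\xi_0|^{-(n-2)})$ and the explicit $U_i$ into the $L^{2n \over n-2}$-norms; the leading contribution is $\int_{|x-\xi_0|>\sqrt{\delta_l\delta_{l+1}}}|x-\xi_0|^{-2n}\,dx=O((\delta_l\delta_{l+1})^{-n/2})$, with $\sqrt{\delta_l\delta_{l+1}}$ the inner radius of $A_l$ (and $\delta_k\delta_{k+1}=\ep^2$), which gives $O((\ep/\delta_l)^n)$ and $O((\delta_{l+1}/\delta_l)^{n/2})$, both $O(\ep^{n/D})$. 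For $(v)$ I use that on $\ome\setminus A_l$ the factor $U_l$ is at most $\delta_l^{-{n-2 \over 2}}$ times a fixed negative power of $\sqrt{\delta_{l-1}/\delta_l}$ or $\sqrt{\delta_l/\delta_{l+1}}$, together with $|H|,|\partial_{\xi_j}H|=O(1)$ on $\overline{B(\xi_0,\rho)}$, to make all three integrals $o(\ep^{(n-2)/D})$. For $(vi)$ I rerun the computations of $(ii)$ and $(iii)$ carrying the extra factor $|a(x)-a(\xi_0)|=O(|x-\xi_0|)=O(\delta_l(1+|y+\sigma_l|))$ on the bulk, which costs one power of $\delta_l$ and so gives $o(\ep^{(n-2)/D})$; the integrals $\int_{\rn}U_{1,0}^{n+2 \over n-2}|y-z|^{-(n-3)}\,dy$ still converge since $n\ge4$.

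The main obstacle is not any single estimate but the bookkeeping: for each pair $(i,l)$ one must verify that all the error terms produced by Lemma~\ref{lemma_expansion_1} — the $\delta^2$, $(\ep/\delta)^{n-1}$, $(\ep/\delta)^{n-2}$ and $\ep^{n-2}$ pieces — together with the tails of the rescaled annuli are $o(\ep^{(n-2)/D})$, and this is exactly where the precise exponents in~\eqref{eq_parameter} and the geometric-mean radii in~\eqref{eq_annulus} are used (in particular $\delta_k\delta_{k+1}=\ep^2$, which places the inner boundary of $A_k$ exactly on the hole $\partial B(\xi_0,\ep)$). The $C^1$-dependence on $(\md,\ms)$ that is needed in Proposition~\ref{prop_expansion} follows by differentiating the same expansions in the parameters $d_i,\sigma_i$ and repeating the estimates, which is routine once the leading order has been settled.
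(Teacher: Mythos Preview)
Your proposal is correct and follows essentially the same approach as the paper: apply Lemma~\ref{lemma_expansion_1} and the explicit form~\eqref{eq_udx} of $U_l$, rescale $x=\xi_l+\delta_l y$, and track the powers of $\ep$ coming from~\eqref{eq_parameter} and~\eqref{eq_annulus}. In fact the paper's own proof consists of a single sentence referring to \cite[Section~3]{GMP} and \cite[Lemma~4.2]{KP2} for the details, so your outline is considerably more explicit than what appears there while remaining the same computation.
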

\noindent and
\begin{lemma}\label{lemma_expansion_3}
Assume $i,\ l = 1, \cdots, k$. If $n \ge 4$, we have
\begin{equation}\label{eq_expansion_3_1}
\inte (\nabla a \cdot \nabla P_\ep U_i) P_\ep U_l = o\(\ep^{n-2 \over (n-1)+2(k-1)}\)
\end{equation}
and
\begin{multline}\label{eq_expansion_3_5}
\inte (\nabla a \cdot \nabla P_\ep U_i) \cdot \(\ep^{(n-2) + 2(l-1) \over (n-1)+2(k-1)} P_\ep \psi_l^j\)\\
= \begin{cases}
\delta_{i1}\delta_{l1} \dfrac{\partial a}{\partial x_j}(\xi_0) c_2 \ep^{n-2 \over (n-1)+2(k-1)} + o\(\ep^{n-2 \over (n-1)+2(k-1)}\) &\text{for } j = 1, \cdots, n\\
o\(\ep^{n-2 \over (n-1)+2(k-1)}\) &\text{for } j = 0.
\end{cases}
\end{multline}
\end{lemma}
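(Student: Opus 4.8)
The plan is, for both \eqref{eq_expansion_3_1} and \eqref{eq_expansion_3_5}, to replace the projected bubbles $P_\ep U_i$, $P_\ep\psi_l^j$ by the genuine objects $U_i$, $\psi_l^j$ — keeping the errors under control by Lemma~\ref{lemma_expansion_1} and the corresponding bounds for the gradients $\nabla(P_\ep U_i-U_i)$, $\nabla(P_\ep\psi_l^j-\psi_l^j)$ — and then to evaluate the remaining integrals of $U_i$, $\psi_i^m$ through the rescaling $x=\xi_i+\delta_i y$. Throughout I write $D:=(n-1)+2(k-1)$, so by \eqref{eq_parameter} each $\delta_i=d_i\,\ep^{((n-2)+2(i-1))/D}$ is a power of $\ep$, the target order is $\ep^{(n-2)/D}\asymp\delta_1$, $\ep^{((n-2)+2(i-1))/D}=\delta_i/d_i$, and $\ep^{((n-2)+2(i-1))/D}=o(\ep^{(n-2)/D})$ as soon as $i\ge2$. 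Since $\nabla a\in C^1(\overline\Omega)$ is bounded, I shall use freely the pointwise inequalities $|\psi_i^m|\le|\nabla U_i|$, $|\psi_l^j|\le\frac{n-2}{2\delta_l}U_l$, $|\psi_i^0|\le C\delta_i^{-1}U_i$, together with the scalar bounds, obtained by breaking the domain at the radii $|x-\xi_0|=\delta_i,\delta_l$ and rescaling,
\begin{equation*}
\inte|\nabla U_i|\,U_l=O\big(\delta_{\min}^{(n-2)/2}\delta_{\max}^{(4-n)/2}\big),\qquad
\inte\frac{|\nabla U_i|}{|x-\xi_0|^{n-2}}=O\big(\delta_i^{(4-n)/2}\big),\qquad
\inte|\nabla U_i|=O\big(\delta_i^{(n-2)/2}\big),
\end{equation*}
where $\delta_{\min}=\min(\delta_i,\delta_l)$ and $\delta_{\max}=\max(\delta_i,\delta_l)$.

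For \eqref{eq_expansion_3_1}, the diagonal term is computed directly: since $P_\ep U_i=0$ on $\pome$,
\[
\inte(\nabla a\cdot\nabla P_\ep U_i)\,P_\ep U_i=\tfrac12\inte\nabla a\cdot\nabla\big(P_\ep U_i\big)^2=-\tfrac12\inte(\Delta a)\,\big(P_\ep U_i\big)^2=O\big(\|U_i\|_{L^2(\ome)}^2\big)=O\big(\delta_i^2|\log\ep|\big),
\]
which is $o(\ep^{(n-2)/D})$ because $2((n-2)+2(i-1))>n-2$. For $i\ne l$ one writes $P_\ep U_i=U_i-(U_i-P_\ep U_i)$, $P_\ep U_l=U_l-(U_l-P_\ep U_l)$ and expands $\inte(\nabla a\cdot\nabla P_\ep U_i)P_\ep U_l$ into four products; using the pointwise estimates of Lemma~\ref{lemma_expansion_1} and the first scalar bound above, the leading product $\inte(\nabla a\cdot\nabla U_i)U_l$ is $O\big(\delta_{\min}^{(n-2)/2}\delta_{\max}^{(4-n)/2}\big)$, i.e.\ a power $\ep^{\alpha/D}$ with $\alpha=(n-2)\max(i,l)+(4-n)(\min(i,l)-1)$, and an elementary computation gives $\alpha>n-2$ whenever $i\ne l$ and $n\ge4$; the remaining three products carry extra factors $\delta^{(n-2)/2}$ or $\ep^{n-2}/\delta^{n-2}$ and are smaller still. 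This proves \eqref{eq_expansion_3_1}.

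For \eqref{eq_expansion_3_5} the starting identity is $\nabla U_i=-(\psi_i^1,\dots,\psi_i^n)$, which allows one to rewrite $\nabla a\cdot\nabla U_i=-\sum_{m=1}^n\partial_{x_m}a(x)\,\psi_i^m$; the Taylor expansion $\partial_{x_m}a(x)=\partial_{x_m}a(\xi_0)+O(|x-\xi_0|)$ then reduces the leading part of the integral to $-\ep^{((n-2)+2(l-1))/D}\sum_m\partial_{x_m}a(\xi_0)\inte\psi_i^m\psi_l^j$. If $i\ne l$, using $\ep^{((n-2)+2(l-1))/D}=\delta_l/d_l$ and $|\psi_i^m\psi_l^j|\le\frac{n-2}{2\delta_l}|\nabla U_i|U_l$ one sees this is $O\big(\delta_{\min}^{(n-2)/2}\delta_{\max}^{(4-n)/2}\big)=o(\ep^{(n-2)/D})$ by exactly the count of the previous paragraph; and if $l=i\ge2$ the prefactor $\ep^{((n-2)+2(i-1))/D}$ already beats $\ep^{(n-2)/D}$. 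If $i=l$, a parity argument in $x-\xi_i$ gives $\int_{\rn}\psi_i^m\psi_i^0=0$ and $\int_{\rn}\psi_i^m\psi_i^j=\beta_n\delta_{mj}$ with $\beta_n:=\alpha_n^2(n-2)^2\int_{\rn}y_1^2(1+|y|^2)^{-n}\,dy\in(0,\infty)$ independent of $\delta_i$; hence the case $j=0$ gives $o(1)$, while for $j\in\{1,\dots,n\}$ the leading term is $-\beta_n\,\partial_{x_j}a(\xi_0)$, which after multiplication by $\ep^{((n-2)+2(i-1))/D}$ is $o(\ep^{(n-2)/D})$ for $i\ge2$ and, for $i=1$, equals $c_2\,\partial_{x_j}a(\xi_0)\,\ep^{(n-2)/D}+o(\ep^{(n-2)/D})$ with $c_2$ the constant of Proposition~\ref{prop_expansion} (equal, up to sign, to $\beta_n$). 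One finishes by checking that passing from $\int_{\rn}$ to $\inte$, the Taylor remainder $O(|x-\xi_0|)$, and the corrections $P_\ep U_1-U_1$, $P_\ep\psi_1^j-\psi_1^j$ of Lemma~\ref{lemma_expansion_1} contribute only $o(\ep^{(n-2)/D})$ once multiplied by the prefactor; here the relevant bounds are $\inte|\nabla U_1|=O(\delta_1^{(n-2)/2})$, $\inte|x-\xi_0|(\psi_1^j)^2=O(\delta_1)$ and $\ep^{n-2}\delta_1^{-n/2}\inte|\nabla U_1|\,|x-\xi_0|^{2-n}=O\big(\ep^{n-2}\delta_1^{2-n}\big)=O\big(\ep^{(n-2)(2k-1)/D}\big)$.

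The main obstacle is not conceptual but the exponent bookkeeping: one must verify that every error exponent strictly exceeds $(n-2)/D$ in the worst configurations — consecutive indices such as $\{i,l\}=\{k-1,k\}$, and, for $n\ge5$, the unfavourable sign $(4-n)<0$. The genuinely delicate point is the borderline behaviour of the hole-correction terms, of order $\ep^{n-2}\delta_1^{2-n}=O(\ep^{(n-2)(2k-1)/D})$, and of $\delta_1^{(n-2)/2}$ when $n=4$: these sit exactly at the critical order $\ep^{(n-2)/D}$ precisely when $k=1$, so that for $k=1$ one cannot merely discard them but must exploit the extra gain $\inte(\nabla a\cdot\nabla U_1)(\,\cdot\,)=O(\delta_1^{(n-2)/2})$ (rather than $O(1)$) for a factor smooth near $\xi_0$, and the fact that $|x-\xi_0|^{2-n}$ and $H(\cdot,\xi_1)$ are harmonic in $\ome$, which is what forces the would-be borderline contributions into $o(\ep^{(n-2)/D})$ or onto $\partial_{x_j}a(\xi_0)$ as in the statement; for $k\ge2$ all of these hole corrections are genuinely of lower order and the estimates follow at once.
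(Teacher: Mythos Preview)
Your overall strategy coincides with the paper's: split $P_\ep U_i$ and $P_\ep\psi_l^j$ into the genuine bubbles plus the corrections of Lemma~\ref{lemma_expansion_1}, and reduce everything to scaled integrals of $U_i$ and $\psi_i^m$. The one genuinely different move is your treatment of the diagonal $i=l$ in \eqref{eq_expansion_3_1}: you use
\[
\inte(\nabla a\cdot\nabla P_\ep U_i)\,P_\ep U_i=-\tfrac12\inte(\Delta a)\,(P_\ep U_i)^2=O\big(\|U_i\|_{L^2(\ome)}^2\big),
\]
which is cleaner than the paper's argument (splitting $\ome$ at radius $\sqrt{\delta_i}$, Taylor--expanding $\nabla a$, and exploiting the oddness of $(y-\sigma_i)_j(1+|y-\sigma_i|^2)^{1-n}$). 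For the off--diagonal terms the paper proceeds case by case ($i<l$, $i>l=1$, $i>l\ge2$), whereas you package the estimate into the single bound $\inte|\nabla U_i|\,U_l=O(\delta_{\min}^{(n-2)/2}\delta_{\max}^{(4-n)/2})$ and check $\alpha>n-2$; both routes work. For \eqref{eq_expansion_3_5} you and the paper do essentially the same thing, identifying the leading term via $\partial_{x_m}U_i=-\psi_i^m$ and the orthogonality $\int_{\rn}\psi_i^m\psi_i^j=\beta_n\delta_{mj}$.

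Two comments. First, your final paragraph overstates the difficulty: the ``borderline'' quantities $\ep^{n-2}\delta_1^{2-n}$ and $\delta_1^{(n-2)/2}$ that you flag are the sizes of the corrections \emph{before} multiplication by the prefactor $\ep^{(n-2)/D}$ in \eqref{eq_expansion_3_5}; once multiplied they become $O(\ep^{2(n-2)/D})$ and $O(\delta_1^{n/2})$ respectively, which are genuinely $o(\ep^{(n-2)/D})$ even for $k=1$. No harmonic--function trick is needed. Second, your identification ``$c_2=\beta_n$ up to sign'' deserves care: your leading term comes out as $-\beta_n\,\partial_{x_j}a(\xi_0)$, while the statement reads $+c_2\,\partial_{x_j}a(\xi_0)$ with $c_2>0$; since in fact $\beta_n=c_2$, there is a sign to reconcile (it is the paper's stated sign in \eqref{eq_expansion_3_5}, not your computation, that is inconsistent with \eqref{eq_expansion_3_4}). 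This does not affect the $C^0$--expansion \eqref{eq_expansion}, where the $c_2$--term arises from Lemma~\ref{lemma_expansion_2}(i) rather than from \eqref{eq_expansion_3_5}.
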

\begin{rmk}
In \cite[Lemma A.2]{ACP} (see also \cite[Lemma A.9]{KP2} and \eqref{eq_ortho_1} below), the authors proved that
\[\inte (\nabla a \cdot \nabla P_\ep U_i) P_\ep U_l,\quad \inte (\nabla a \cdot \nabla P_\ep U_i) \cdot \big(\delta_l P_\ep \psi_l^j\big) = O\(\delta_1^{1-\eta}\)
\quad \text{for some small } \eta > 0\]
by utilizing Young's inequality.
However, this estimation is insufficient in our situation so we will pursue another approach making the use of Lemma \ref{lemma_expansion_1} in a direct way, which turns out to be more complicated.
\end{rmk}
\begin{proof}[Proof of Lemma \ref{lemma_expansion_2}]
The computations follow as an application of Lemma \ref{lemma_expansion_1} or by the direct computation using the definition of $U_l$ in \eqref{eq_udx}.
For the detailed exposition in similar settings, see \cite[Section 3]{GMP} and \cite[Lemma 4.2]{KP2}.
\end{proof}
\begin{proof}[Proof of Lemma \ref{lemma_expansion_3}]
We prove \eqref{eq_expansion_3_1} first. To do this, we decompose the left-hand side of \eqref{eq_expansion_3_1} into
\begin{equation}\label{eq_expansion_3_2}
\inte (\nabla a \cdot \nabla P_\ep U_i) P_\ep U_l = \inte \left[\nabla a \cdot \nabla (P_\ep U_i - U_i)\right] P_\ep U_l + \inte \(\nabla a \cdot \nabla U_i\) (P_\ep U_l-U_l) + \inte (\nabla a \cdot \nabla U_i) U_l
\end{equation}
and estimate each of terms in the right-hand side. For brevity, we will use $PU_i = P_\ep U_i$.

\medskip
First, we compute the first term. Since $0 \le P U_i \le U_i$ in $\ome$ for all $i = 1, \cdots, k$, it holds
\begin{align*}
\inte |\nabla a| \cdot |\nabla (PU_i - U_i)| P U_l &\le C \|P U_i - U_i\|_{H^1(\ome)}\cdot\|U_l\|_{L^2(\ome)} \\
&\le \begin{cases}
C \delta_l \cdot \|P U_i - U_i\|_{H^1(\ome)} & \text{if } n \ge 5,\\
C \delta_l|\log\delta_l| \cdot \|P U_i - U_i\|_{H^1(\ome)} & \text{if } n = 4.
\end{cases}
\end{align*}
Moreover, since
\[\inte |\nabla PU_i|^2 = \inte \nabla U_i \cdot \nabla PU_i = \inte U_i^{n+2 \over n-2} PU_i,\]
by applying \eqref{eq_pdxj}, we obtain that
\begin{align*}
&\ \|PU_i - U_i\|_{H^1(\ome)}^2\\
&= - \alpha_n^{2n \over n-2} \inte {\delta_i^n \over (\delta_i^2 + |x-\xi_i|^2)^n} + \alpha_n^2 (n-2)^2 \delta_i^{n-2} \inte {|x-\xi_i|^2 \over (\delta_i^2 + |x-\xi_i|^2)^n} + \inte U_i^{n+2 \over n-2} (U_i - PU_i)\\
&= - \left[\alpha_n^{2n \over n-2} \int_{\rn} {1 \over (1 + |y|^2)^n} + O\(\delta_i^n + \({\ep \over \delta_i}\)^n\)\right] \\
&\quad  + \left[\alpha_n^2 (n-2)^2 \int_{\rn} {|y|^2 \over (1 + |y|^2)^n} + O\(\delta_i^{n-2} + \({\ep \over \delta_i}\)^n\)\right] + O\bigg(\delta_i^{n-2} + \({\ep \over \delta_i}\)^{n-2}\bigg),
\end{align*}
namely,
\[\|PU_i - U_i\|_{H^1(\ome)} = O\(\delta_i^{n-2 \over 2} + \({\ep \over \delta_i}\)^{n-2 \over 2}\).\]
Thus the first term is $o(\delta_1)$ provided $n \ge 4$. Note that our argument gives only $\inte [\nabla a \cdot \nabla (P_\ep U_i - U_i)] P_\ep U_l = O(\sqrt{\delta_1}) \ne o(\delta_1)$ if $n = 3$.

\medskip
Next, we consider the second term. From Lemma \ref{lemma_expansion_1}, we deduce
\[\inte |\nabla a||\nabla U_i||PU_l-U_l| \le C \inte |\nabla U_i|\(\delta_l^{n-2 \over 2} + {\ep^{n-2} \over \delta_l^{n-2 \over 2} |x-\xi_0|^{n-2}}\).\]
Denoting $\widetilde{\Omega}_\ep = (\ome - \xi_0)/\delta_i$, we get from \eqref{eq_pdxj} that
\[\delta_l^{n-2 \over 2} \inte |\nabla U_i| dx = \delta_l^{n-2 \over 2}\delta_i^{n \over 2} \intwe {|y - \sigma_i| \over (1+ |y - \sigma_i|^2)^{n \over 2}} dy = O\((\delta_i\delta_l)^{n-2 \over 2}\) = o(\delta_1).\]
Moreover there holds
\[{\ep^{n-2} \over \delta_l^{n-2 \over 2}} \cdot \inte |\nabla U_i| \cdot {1 \over |x-\xi_0|^{n-2}}dx = C \delta_i \cdot \({\ep \over \delta_i}\)^{n-2 \over 2}\({\ep \over \delta_l}\)^{n-2 \over 2} \intwe {|y - \sigma_i| \over (1 + |y - \sigma_i|^2)^{n \over 2}} \cdot {1 \over |y|^{n-2}} dy = o(\delta_1),\]
from which we conclude that the second term is $o(\delta_1)$ .

\medskip
As a result, it suffices to show that the third term is also $o(\delta_1)$. We consider when $i = l$ first.
To estimate the term for this case, we will divide the domain $\ome$ into two disjoint sets $B(\xi_0, \sqrt{\delta_i}) \setminus B(\xi_0, \ep)$ and $\Omega \setminus B(\xi_0, \sqrt{\delta_i})$
and then deal with each of the integrations of $(\nabla a \cdot \nabla U_i)U_i$ over these domains.
Employing the dimension assumption $n \ge 4$, we find that
\begin{align*}
\int_{\Omega \setminus B(\xi_0, \sqrt{\delta_i})} |\nabla a \cdot \nabla U_i| U_i &\le C \int_{\rn \setminus B(\xi_0, \sqrt{\delta_i})} {\delta_i^{n-2} |x - \xi_i| \over (\delta_i^2 + |x - \xi_i|^2)^{n-1}} dx \\
& = C \delta_i \int_{\rn \setminus B(0, 1/\sqrt{\delta_i})} {|y - \sigma_i| \over (1 + |y - \sigma_i|^2)^{n-1}} dy = o(\delta_i).
\end{align*}
On the other hand, we see
\begin{equation}\label{eq_expansion_3_6}
\begin{aligned}
&\ \left| \int_{B(\xi_0, \sqrt{\delta_i}) \setminus B(\xi_0, \ep)} (\nabla a \cdot \nabla U_i) U_i\right|\\
&= \sum_{j=1}^n \left| {\partial a \over \partial x_j}(\xi_0)\right| \cdot \left|\int_{B(\xi_0, \sqrt{\delta_i}) \setminus B(\xi_0, \ep)} {\partial U_i \over \partial x_j}\cdot U_i \right|
+ \left| \int_{B(\xi_0, \sqrt{\delta_i}) \setminus B(\xi_0, \ep)} \left\{(\nabla a - \nabla a(\xi_0)) \cdot \nabla U_i\right\} U_i \right| \\
&\le C\delta_i \sum_{j=1}^n \left|{\partial a \over \partial x_j}(\xi_0)\right| \cdot \left| \int_{B(0, 1/\sqrt{\delta_i}) \setminus B(0, \ep/\delta_i)} {(y - \sigma_i)_j \over (1 + |y - \sigma_i|^2)^{n-1}}dy \right| \\
&\hspace{130pt} + \delta_i \int_{B(0, 1/\sqrt{\delta_i}) \setminus B(0, \ep/\delta_i)} \left|\nabla a(\xi_0 + \delta_i y) - \nabla a (\xi_0)\right| \cdot {|y - \sigma_i| \over (1 + |y - \sigma_i|^2)^{n-1}}dy.
\end{aligned}
\end{equation}
Since
\[\int_{B(0, 1/\sqrt{\delta_i}) \setminus B(0, \ep/\delta_i)} {(y - \sigma_i)_j \over (1 + |y - \sigma_i|^2)^{n-1}}dy = - \(\int_{B(0, \ep/\delta_i)} + \int_{B(0, 1/\sqrt{\delta_i})^c}\) {(y - \sigma_i)_j \over (1 + |y - \sigma_i|^2)^{n-1}}dy = o(1)\]
and
\[\left|\nabla a(\xi_0 + \delta_i y) - \nabla a (\xi_0)\right| \le \|D^2a\|_{L^{\infty}(\Omega)} \delta_i |y| \le C \sqrt{\delta_i} \quad \text{for any } y \in B(0, 1/\sqrt{\delta_i}),\]
we arrive at $\int_{B(\xi_0, \sqrt{\delta_i}) \setminus B(\xi_0, \ep)} (\nabla a \cdot \nabla U_i) U_i = o(\delta_i)$. This implies that the third term of the right-hand side in \eqref{eq_expansion_3_2} is $o(\delta_i)$ if $i = l$. For $i \ne l$, we have
\begin{multline*}
\inte |\nabla U_i|U_l = C \inte {\delta_i^{n-2 \over 2} |x-\xi_i| \over (\delta_i^2 + |x - \xi_i|^2)^{n \over 2}}{\delta_l^{n-2 \over 2} \over (\delta_l^2 + |x - \xi_l|^2)^{n-2 \over 2}} dx\\
\le \begin{cases}
\begin{aligned}
C \delta_i \({\delta_l \over \delta_i}\)^{n-2 \over 2} \int_{\rn} {|y - \sigma_i| \over (1 + |y-\sigma_i|^2)^{n \over 2}} \cdot \dfrac{1}{|y - (\delta_l/\delta_i)\sigma_l|^{n-2}} dy = o(\delta_i) = o(\delta_1)
\end{aligned} & \text{if } i < l,\\
\begin{aligned}
&\({\delta_i \over \delta_1}\)^{n-2 \over 2} \int_{B(\xi_0, \sqrt{\delta_1}) \setminus B(\xi_0, \ep)} {|x-\xi_i|\ dx \over (\delta_i^2 + |x - \xi_i|^2)^{n \over 2}} + \|\nabla U_i\|_{L^2(\Omega \setminus B(\xi_0, \sqrt{\delta_1}))} \|U_1\|_{L^2(\ome)}\\
&= O(\delta_1) \int_{B\(0, {\sqrt{\delta_1} \over \delta_i}\) \setminus B\(0, {\ep \over \delta_i}\)} {\delta_i |y-\sigma_i|\ dy \over (1 + |y - \sigma_i|^2)^{n \over 2}} + O(\delta_1|\log\delta_1|)\cdot O\({\delta_i \over \sqrt{\delta_1}}\)^{n-2 \over 2}\\
&= O(\delta_1) O\(\sqrt{\delta_1}\) + o(\delta_1) = o(\delta_1)
\end{aligned} &\text{if } i > l = 1,\\
C \delta_i^{-1} \|U_i\|_{L^2(\ome)} \|U_l\|_{L^2(\ome)} =  O\(|\log\delta_i| \cdot \delta_l |\log\delta_l|\) = o(\delta_1) & \text{if } i > l \ge 2.
\end{cases}
\end{multline*}
Hence \eqref{eq_expansion_3_1} is true.

\medskip
The derivation of \eqref{eq_expansion_3_5} goes along the same way as the above except the part that corresponds to \eqref{eq_expansion_3_6}.
In this case, instead of \eqref{eq_expansion_3_6}, we have that
\begin{align*}
\int_{B(\xi_0, \sqrt{\delta_i}) \setminus B(\xi_0, \ep)} (\nabla a \cdot \nabla U_i) \big(\delta_i \psi_i^j\big)
&= \sum_{m=1}^n {\partial a \over \partial x_m}(\xi_0) \int_{B(\xi_0, \sqrt{\delta_i}) \setminus B(\xi_0, \ep)} {\partial U_i \over \partial x_m}\cdot \big(\delta_i \psi_i^j\big) + o(\delta_1)\\
&= \delta_{i1} {\partial a \over \partial x_j}(\xi_0) c_2 \delta_1 + o(\delta_1)
\end{align*}
for $i = 1, \cdots, k$ where $c_2$ is a constant defined in \eqref{eq_expansion_3_4} below.
Thus \eqref{eq_expansion_3_5} follows and the proof of Lemma \ref{lemma_expansion_3} is completed.
\end{proof}

Using Proposition \ref{prop_auxiliary}, we can also check that
\begin{lemma} It holds that
\[I_{\ep}\big(V_{\ep}^{\md, \ms}  + \phi_{\ep}^{\md, \ms}\big) - I_{\ep}\big(V_{\ep}^{\md, \ms}\big) = o\(\ep^{n-2 \over (n-1)+2(k-1)}\).\]
\end{lemma}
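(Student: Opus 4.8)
The plan is to Taylor-expand $I_\ep$ about $V_\ep^{\md,\ms}$ and to show that every term produced by the expansion is controlled by $\|\phi_\ep^{\md,\ms}\|_\ep^2$, after which the bound \eqref{eq_auxiliary_1} of Proposition \ref{prop_auxiliary} closes the argument. Write $\phi := \phi_\ep^{\md,\ms}$, $V := V_\ep^{\md,\ms}$, $p := \tfrac{n+2}{n-2}$ and $\beta := \tfrac{n-2}{(n-1)+2(k-1)}$, so that \eqref{eq_auxiliary_1} reads $\|\phi\|_\ep^2 = o(\ep^\beta)$, and consequently also $\|\phi\|_\ep^{p+1} = (\|\phi\|_\ep^2)^{n/(n-2)} = o(\ep^\beta)$. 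Since $I_\ep(u) = \tfrac12\|u\|_\ep^2 - \tfrac1{p+1}\inte a|u|^{p+1}$, a second-order Taylor expansion of $t \mapsto |V+t\phi|^{p+1}$ gives
\[
I_\ep(V+\phi) - I_\ep(V) = \Big[(V,\phi)_\ep - \inte a|V|^{p-1}V\phi\Big] + \frac12\|\phi\|_\ep^2 - p\int_0^1 (1-t)\inte a\,|V+t\phi|^{p-1}\phi^2\,dt .
\]

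The first step I would carry out is to dispose of the bracketed ``mixed'' term, which equals $DI_\ep(V)[\phi]$ and a priori involves the $\home$-error of the ansatz $V$. Here I would use that $\phi$ solves \eqref{eq_main_auxiliary}: since $\phi\in K_\ep\pe$ and $V+\phi - i_\ep^*\big(|V+\phi|^{p-1}(V+\phi)\big)\in K_\ep$, testing this relation against $\phi$ and using the identity $\big(i_\ep^*(g),\phi\big)_\ep = \inte a\,g\,\phi$ yields $(V,\phi)_\ep = \inte a\,|V+\phi|^{p-1}(V+\phi)\,\phi - \|\phi\|_\ep^2$. Substituting this into the expansion, the wrong-sign quadratic terms cancel in part and one is left with
\[
I_\ep(V+\phi) - I_\ep(V) = \inte a\Big[|V+\phi|^{p-1}(V+\phi) - |V|^{p-1}V\Big]\phi - \frac12\|\phi\|_\ep^2 - p\int_0^1(1-t)\inte a\,|V+t\phi|^{p-1}\phi^2\,dt .
\]

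Next I would estimate the two remaining integrals. Using the elementary inequality $\big||s+t|^{p-1}(s+t)-|s|^{p-1}s\big|\le C\big(|s|^{p-1}|t|+|t|^p\big)$, valid for every $p\ge 1$, together with $|V+t\phi|^{p-1}\le C(|V|^{p-1}+|\phi|^{p-1})$, both integrands are dominated by $a\big(|V|^{p-1}\phi^2+|\phi|^{p+1}\big)$. Since $p-1=\tfrac4{n-2}$ and $p+1=\tfrac{2n}{n-2}$, Hölder's inequality with exponents $\tfrac n2$ and $\tfrac n{n-2}$ and the Sobolev embedding give $\inte a|V|^{p-1}\phi^2\le C\|V\|_{L^{2n/(n-2)}(\ome)}^{p-1}\|\phi\|_\ep^2$ and $\inte a|\phi|^{p+1}\le C\|\phi\|_\ep^{p+1}$; moreover $\|V\|_{L^{2n/(n-2)}(\ome)}\le\sik\|P_\ep U_i\|_{L^{2n/(n-2)}(\ome)}\le\sik\|U_i\|_{L^{2n/(n-2)}(\rn)}$ is bounded uniformly in $\ep$ and in $(\md,\ms)$ over a compact set, by the scaling invariance of that norm of the bubbles. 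This yields $\big|I_\ep(V+\phi)-I_\ep(V)\big|\le C\big(\|\phi\|_\ep^2+\|\phi\|_\ep^{p+1}\big)=o(\ep^\beta)$, which is the assertion.

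I expect the only genuine (and quite mild) obstacle to be precisely the step of the second paragraph. If one instead tried to bound $DI_\ep(V)[\phi]$ directly by Cauchy--Schwarz, one would need a separate estimate of $\|V-i_\ep^*(|V|^{p-1}V)\|_\ep$, i.e.\ of the size of the error of the approximate solution in $\home$, which is delicate in this multi-scale regime; routing the computation through the auxiliary equation \eqref{eq_main_auxiliary} avoids it altogether, and all the remaining estimates go through for every $n\ge 4$ irrespective of whether $p\ge 2$ or $p<2$.
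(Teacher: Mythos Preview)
Your argument is correct and is precisely the standard route for this type of estimate; the paper itself does not give a proof but refers to \cite[Lemma~4.1]{KP2}, whose argument is essentially the one you wrote, namely: use the auxiliary equation \eqref{eq_main_auxiliary} tested against $\phi\in K_\ep\pe$ to eliminate the linear term $DI_\ep(V)[\phi]$, and then control the remaining quadratic and higher-order terms by $\|\phi\|_\ep^2+\|\phi\|_\ep^{p+1}$ via H\"older and Sobolev, invoking Proposition~\ref{prop_auxiliary} at the end. Your closing remark is also apt: one could alternatively bound $DI_\ep(V)[\phi]$ by Cauchy--Schwarz using the error estimate of Lemma~\ref{lemma_error}, since $\|R_\ep\|_\ep\cdot\|\phi\|_\ep=o(\ep^\beta)$ as well, but routing through \eqref{eq_main_auxiliary} is cleaner and avoids invoking that lemma.
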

\begin{proof}
We refer to the proof of \cite[Lemma 4.1]{KP2}.
\end{proof}

Now we are ready to prove that \eqref{eq_expansion} holds $C^1$-uniformly on compact sets of $(0,\infty)^k\times(\mathbb R^n)^k$.
\begin{proof}[Proof of Proposition \ref{prop_expansion}]
By the previous lemma, it is sufficient to show that
\[I_{\ep}\big(V_{\ep}^{\md, \ms}\big) = c_1k + \Psi(\md, \ms) \ep^{n-2 \over (n-1)+2(k-1)} + o\(\ep^{n-2 \over (n-1)+2(k-1)}\)\]
where $\Psi$ is the function given in \eqref{eq_psi_ep} and $c_1 > 0$ is a fixed quantity. For simplicity, we set $p = (n+2)/(n-2)$ and omit the subscripts and superscripts of $V_{\ep}^{\md, \ms}$.

\medskip
As in (4.6) and (4.7) in \cite{KP2}, we write
\begin{multline*}
{1 \over 2} \inte a|\nabla V|^2 = {1 \over 2} \slk \left[\intal a U_l^{p+1} + \inte a U_l^p (PU_l -U_l) + \int_{\ome \setminus A_l} a U_l^{p+1} - \inte (\nabla a \cdot \nabla PU_l) PU_l \right]\\
+ \sum_{l < i} (-1)^{l+i} \left[\intal a U_l^pU_i + \inte a U_l^p (PU_i -U_i) + \int_{\ome \setminus A_l} a U_l^p U_i - \inte (\nabla a \cdot \nabla PU_l) PU_i \right]
\end{multline*}
and
\begin{equation}\label{eq_expansion_3_3}
\begin{aligned}
&\ {1 \over p + 1} \inte a|V|^{p+1} dx \\
&= \slk \bigg[ {1 \over p + 1} \intal a U_l^{p+1} + \intal aU_l^p(PU_l - U_l) \bigg]
+ \sum_{i \ne l} (-1)^{i+l} \bigg[ \intal aU_l^p U_i + \intal aU_l^p (PU_i-U_i) \bigg]\\
&\ + p\int_0^1 (1-\theta) \intal a \bigg| (-1)^{l+1} U_l + \theta\Big[ (-1)^{l+1}(PU_l-U_l) + \sum_{i \ne l} (-1)^{i+1} PU_i\Big]\bigg|^{p-1}\\
&\hspace{190pt} \bigg((-1)^{l+1}(PU_l-U_l) + \sum_{i \ne l} (-1)^{i+1} PU_i\bigg)^2 dx d\theta + O\(\delta_1^n\).
\end{aligned}
\end{equation}
Following the computations which were conducted to obtain (4.8) in \cite{KP2}, we see that
\[\intal aU_l^pU_i = \int_{A_i} aU_i^pU_l + o(\delta_1) \quad \text{for any pair } (i, l) \text{ such that } i < l.\]
Furthermore, (ii)-(iv) in Lemma \ref{lemma_expansion_2} lead us to observe that the last term $p \int_0^1(1-\theta)\intal \cdots dxd\theta$ in the right-hand side in \eqref{eq_expansion_3_3} is $o(\delta_1)$ (see (4.14) in \cite{KP2}). As a result, (v) in Lemma \ref{lemma_expansion_2} and Lemma \ref{lemma_expansion_3} yield
\begin{multline*}
{1 \over 2} \inte a|\nabla V|^2 - {1 \over p + 1} \inte a|V|^{p+1} dx \\
= {1 \over n} \slk \intal a U_l^{p+1} - {1 \over 2} \slk \inte a U_l^p (PU_l -U_l) - \sum_{l < i} (-1)^{l+i} \intal a U_l^pU_i+ o(\delta_1).
\end{multline*}
However, (i)-(iii) and (vi) in Lemma \ref{lemma_expansion_2} then give us  \eqref{eq_expansion} and \eqref{eq_psi_ep} with
\begin{equation}\label{eq_expansion_3_4}
c_1 = c_2 = {1 \over n} \cdot (n(n-2))^{n \over 2} \int_{\rn} {dy \over (1+|y|^2)^n} \quad \text{and} \quad 2c_3 = c_4 = (n(n-2))^{n \over 2}|B_n|,
\end{equation}
as we desired.

\medskip
Having \eqref{eq_expansion_3_5} in mind, we can perform the $C^1$-estimate in a similar way to \cite[Subsection 5.1]{KP2}, which we omit.
\end{proof}

\section{Existence of critical points for $\Psi$}\label{sec_critical}
Here we will give the proof of Proposition \ref{prop_critical}. If $k = 1$ or $2$, then the proof is relatively simple so we will assume here $k \ge 3$.

\medskip
Setting $d_1 = t_1,\ d_2 = t_1t_2, \cdots,\ d_k = t_1t_2\cdots t_k$ and writing $\mt = (t_1, \cdots, t_k)$,
let us define
\[\wpe(\mt, \ms) = \Psi(\md(\mt), \ms) = c_2 \la \nabla a(\xi_0), \sigma_1 \ra t_1 + \sum\limits_{i=1}^{k-1} {c_4 a(\xi_0) \over (1+|\sigma_i|^2)^{n-2 \over 2}} t_{i+1}^{n-2 \over 2} + {c_3 a(\xi_0) \over (1+|\sigma_k|^2)^{n-2}} {1 \over (t_1\cdots t_k)^{n-2}}\]
for any
\[(\mt, \ms) \in (0,\infty)^k \times \Xi \quad \text{where } \Xi := \left\{(\sigma_1, \cdots, \sigma_k) : \la \nabla a(\xi_0), \sigma_1 \ra > 0, \sigma_2, \cdots, \sigma_k \in \rn\right\}.\]
Then for each fixed $\ms \in \Xi$, there exists the unique point $\mt = \mt(\ms)$ such that $\dfrac{\partial \wpe}{\partial t_i}(\mt, \ms) = 0$ ($i = 1, \cdots, k$).
In fact, after some computations, one can show that $\dfrac{\partial \wpe}{\partial t_1}(\mt, \ms) = 0$ and $\dfrac{\partial \wpe}{\partial t_i}(\mt, \ms) = 0$ ($i= 2, \cdots, k$) are equivalent to
\begin{equation}\label{eq_critical_1}
c_2 \la \nabla a(\xi_0), \sigma_1 \ra t_1 = {c_4(n-2)a(\xi_0) \over 2(1+|\sigma_{i-1}|)^{n-2 \over 2}} \cdot t_i^{n-2 \over 2} = {c_3(n-2)a(\xi_0) \over (1 + |\sigma_k|^2)^{n-2}} \cdot {1 \over (t_1 \cdots t_k)^{n-2}}.
\end{equation}
This system is uniquely solvable and the solution is given by
\[t_1 = {c_4(n-2)a(\xi_0) \over 2 c_2 \la \nabla a(\xi_0), \sigma_1 \ra (1 + |\sigma_1|^2)^{n-2 \over 2}} \cdot t_2^{n-2 \over 2}, \quad t_i = {1 + |\sigma_i|^2 \over 1 + |\sigma_1|^2} \cdot t_2 \ (i = 3, \cdots, k)\]
and
\[t_2 = \left[{c_2 \la \nabla a(\xi_0), \sigma_1 \ra \over c_3(n-2)a(\xi_0)} \cdot (1+|\sigma_1|^2)^{n+2k-5 \over 2}\cdot \prod_{i=2}^k {1 \over 1+|\sigma_i|^2} \right]^{2 \over n+2k-3}.\]
Also, \eqref{eq_critical_1} and the relation $2c_3 = c_4$ (see \eqref{eq_expansion_3_4}) ensure that
\begin{align*}
\widehat{\Psi}(\ms) &:= \wpe(\mt(\ms), \ms)\\
&= {n+2k-3 \over n-2} \left[c_2^{n-2} (c_3a(\xi_0)(n-2))^{2k-1} \right]^{1 \over n+2k-3} \cdot \left[{\la \nabla a(\xi_0), \sigma_1 \ra \over 1+|\sigma_1|^2} \cdot \prod_{i=2}^k {1 \over 1+|\sigma_i|^2}\right]^{n-2 \over n+2k-3}.
\end{align*}
By inspection, we can see that $\widehat{\Psi}$ has a maximum point $\ms_0 = \(\nabla a(\xi_0)/|\nabla a(\xi_0)|, 0, \cdots, 0\)$ in $\Xi$. Hence $(\mt(\ms_0), \ms_0)$ is a critical point of $\wpe$.

\medskip
We claim that $(\mt(\ms_0), \ms_0)$ is a nondegenerate critical point of $\wpe$. Without loss of generality, we may assume that $\nabla a(\xi_0) = (|\nabla a(\xi_0)|, 0, \cdots, 0)$. Then the determinant of the Hessian matrix  $D^2\wpe(\mt(\ms_0), \ms_0)$ is
\[\det \(D^2\wpe(\mt(\ms_0), \ms_0)\) = C \det\(
\begin{array}{cc}
\mca & \mcb \\
\mcb^t & \mcd
\end{array}\)\]
where $C > 0$ is some constant and $\mca := \mca_1 + \mca_2,\ \mcb$ and $\mcd$ are $k \times k$, $k \times n$ and $n \times n$ matrices, respectively, defined by
\begin{align*}
\mca_1 &= b_2f(\mt) \(
\begin{array}{ccccc}
{n-1 \over t_1^2} & {n-2 \over t_1t_2} & {n-2 \over t_1t_3} & \cdots & {n-2 \over t_1t_3}\\
{n-2 \over t_1t_2} & {n-1 \over t_2^2} & {n-2 \over t_2t_3} & \cdots & {n-2 \over t_2t_3}\\
{n-2 \over t_1t_3} & {n-2 \over t_2t_3} & {n-1 \over t_3^2}  & \cdots & {n-2 \over t_3^2} \\
\vdots & \vdots & \vdots & \ddots & \vdots \\
{n-2 \over t_1t_3} & {n-2 \over t_2t_3} & {n-2 \over t_3^2} & \cdots & {n-1 \over t_3^2}
\end{array}\), \quad
\mca_2 = b_2(n-4) \(
\begin{array}{ccccc}
0 & 0 & 0 & \cdots & 0\\
0 & {t_2^{n-6 \over 2} \over 2^{n/2}} & 0 & \cdots & 0\\
0 & 0 & {t_3^{n-6 \over 2} \over 2} & \cdots & 0\\
\vdots & \vdots & \vdots & \ddots & \vdots \\
0 & 0 & 0 & \cdots & {t_3^{n-6 \over 2} \over 2}
\end{array}\),\\
\mcb &= \(
\begin{array}{cccc}
b_1 & 0 & \cdots & 0\\
-{(n-2)b_2t_2^{n-4 \over 2} \over 2^{n/2}}& 0 & \cdots & 0\\
0 & 0 & \cdots & 0\\
\vdots & \vdots & \ddots & \vdots \\
0 & 0 & \cdots & 0
\end{array}\) \quad \text{and} \quad
\mcd = \(
\begin{array}{cccc}
{(n-2)b_2t_2^{n-2 \over 2} \over 2^{n/2}} & 0 & \cdots & 0\\
0 & -{b_2t_2^{n-2 \over 2} \over 2^{(n-2)/2}} & \cdots & 0\\
\vdots & \vdots & \ddots & \vdots \\
0 & 0 & \cdots & -{b_2t_2^{n-2 \over 2} \over 2^{(n-2)/2}}
\end{array}\).
\end{align*}
Note that here we used the notations $b_1$, $b_2$ and $f(\mt)$ to denote $b_1 := c_2 |\nabla a(\xi_0)|$, $b_2 := c_3 a(\xi_0) (n-2)$ and $f(\mt) := (t_1 \cdots t_k)^{2-n}$. Also, we applied the fact that the components of $\ms_0$ are $\sigma_1 = (1, 0, \cdots, 0)$ and $\sigma_i = 0$ for $i \ge 2$ so that
\[\mt(\ms_0) = \({b_2 \over b_1} \cdot \({t_2 \over 2}\)^{n-2 \over 2},\ t_2,\ {t_2 \over 2}, \cdots,\ {t_2 \over 2}\) \quad \text{and} \quad t_2 = 2^{n+2k-5 \over n+2k-3} \cdot \({b_1 \over b_2}\)^{2 \over n+2k-3}.\]
On the other hand, $\det\(\mca - \mcb \mcd^{-1} \mcb^t\) \ne 0$ guarantees the nondegeneracy of the matrix $D^2\wpe(\mt(\ms_0), \ms_0)$ and so it is sufficient to prove it. We see
\begin{multline*}
\mca - \mcb \mcd^{-1} \mcb^t \\
= \(
\begin{array}{ccccc}
[(n-1)-{2 \over n-2}]\lambda^{\beta_1}\wb_1^{\beta_2}\wb_2^{\beta_3} & (n-1)\lambda^{-\beta_4}\wb_1^{\beta_4}\wb_2^2 & (n-2)\lambda^2\wb_1^{\beta_4}\wb_2^2 & \cdots & (n-2)\lambda^2\wb_1^{\beta_4}\wb_2^2\\
(n-1)\lambda^{-\beta_4}\wb_1^{\beta_4}\wb_2^2 & (n-2)\lambda^{\beta_5}\wb_1^{\beta_6}\wb_2^{\beta_7} & 2(n-2)\lambda^{\beta_5}\wb_1^{\beta_6}\wb_2^{\beta_7} & \cdots & 2(n-2)\lambda^{\beta_5}\wb_1^{\beta_6}\wb_2^{\beta_7}\\
(n-2)\lambda^2\wb_1^{\beta_4}\wb_2^2 & 2(n-2)\lambda^{\beta_5}\wb_1^{\beta_6}\wb_2^{\beta_7} & 6(n-2)\lambda^{\beta_5}\wb_1^{\beta_6}\wb_2^{\beta_7} & \cdots & 4(n-2)\lambda^{\beta_5}\wb_1^{\beta_6}\wb_2^{\beta_7}\\
\vdots & \vdots & \vdots & \ddots & \vdots \\
(n-2)\lambda^2\wb_1^{\beta_4}\wb_2^2 & 2(n-2)\lambda^{\beta_5}\wb_1^{\beta_6}\wb_2^{\beta_7} & 4(n-2)\lambda^{\beta_5}\wb_1^{\beta_6}\wb_2^{\beta_7} & \cdots & 6(n-2)\lambda^{\beta_5}\wb_1^{\beta_6}\wb_2^{\beta_7}
\end{array}\)
\end{multline*}
where $\lambda := 2^{1 \over n+2k-3}$, $\wb_i := b_i^{1 \over n+2k-3}$ for $i = 1, 2$, $\beta_1 := n-2$, $\beta_2 := n+4k-4$, $\beta_3 := -2k+1$, $\beta_4 := n+2k-5$, $\beta_5 := -3n-4k+12$, $\beta_6 := n-6$ and $\beta_7 := 2k+3$.
Therefore
\[\det\(\mca - \mcb \mcd^{-1} \mcb^t\) = C \det\(
\begin{array}{cccccc}
(n-1)-{2 \over n-2} & n-1 & 1 & 1 & \cdots & 1\\
n-1 & n-2 & 1 & 1& \cdots & 1\\
2(n-2) & 2(n-2) & 3 & 2 & \cdots & 2\\
2(n-2) & 2(n-2) & 2 & 3 & \cdots & 2\\
\vdots & \vdots & \vdots & \vdots & \ddots & \vdots \\
2(n-2) & 2(n-2) & 2 & 2 & \cdots & 3
\end{array}\) = - C (n+2k-3) \ne 0\]
for some $C > 0$. Here the second equality can be derived from the induction on $k$.
 This concludes the proof.

\appendix
\section{Sketch of proofs of   Proposition \ref{prop_auxiliary} and Proposition \ref{prop_reduction}}\label{sec_sketch}
Here we sketch the proofs of   Proposition  \ref{prop_auxiliary} and Proposition \ref{prop_reduction}. We omit many details which can be found in the literature. We only highlight the steps where the effect of the anisotropic coefficient $a$ leads to  new estimates.
 Set $p = (n+2)/(n-2)$.

The first step is the estimate of the error term.
\begin{lemma}\label{lemma_error}
Let $R_\ep := \Pi_{\ep}\pe\(i_{\ep}^*\(|V_\ep|^{4 \over n-2}V_\ep\)-V_\ep\)$. Then
\[\|R_\ep\| = O\(\ep^{{n+6 \over 2(n+2)}\cdot{n-2 \over (n-1)+2(k-1)}}\) = o\(\ep^{{1 \over 2}\cdot{n-2 \over (n-1)+2(k-1)}}\)\]
where $o\(\ep^{n-2 \over (n-1)+2(k-1)}\) \cdot \ep^{-{n-2 \over (n-1)+2(k-1)}} \to 0$ as $\ep \to 0$ uniformly in $(\md, \ms) \in \Lambda_{d_0,s_0}$.
\end{lemma}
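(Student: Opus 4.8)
The plan is to reduce the bound for $\|R_\ep\|$ to an $L^{2n/(n+2)}$-estimate of the error created by inserting $V_\ep$ into the equation, to split that error into a classical part (the one already appearing in the sign-changing Coron problem of \cite{MP2,GMP}) and a new part produced by the anisotropic coefficient $a$, and finally to check that the new part is of strictly lower order. First, since $\Pi_{\ep}\pe$ is an orthogonal projection, $\|R_\ep\|\le\|i_{\ep}^*(|V_\ep|^{p-1}V_\ep)-V_\ep\|_{\ep}$. Using $\Delta P_\ep U_i=\Delta U_i$ in $\ome$ and $-\Delta U_i=U_i^p$, one computes $-\mathrm{div}(a\nabla P_\ep U_i)=aU_i^p-\nabla a\cdot\nabla P_\ep U_i$, hence $i_{\ep}^*(|V_\ep|^{p-1}V_\ep)-V_\ep=i_{\ep}^*\big(g_1+a^{-1}g_2\big)$ with $g_1:=|V_\ep|^{p-1}V_\ep-\sik(-1)^{i+1}U_i^p$ the usual nonlinear error and $g_2:=\sik(-1)^{i+1}\nabla a\cdot\nabla P_\ep U_i$ the contribution of the anisotropy. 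By the continuity of $i_{\ep}^*$ and $\min_\Omega a>0$, it then suffices to bound $\|g_1\|_{L^{2n/(n+2)}(\ome)}$ and $\|g_2\|_{L^{2n/(n+2)}(\ome)}$, uniformly for $(\md,\ms)$ in a fixed compact subset of $(0,\infty)^k\times(\rn)^k$.

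For $g_1$ the coefficient $a$ plays no role, and with the weights in \eqref{eq_parameter} the functions $P_\ep U_1,\dots,P_\ep U_k$ form a tower whose consecutive scale ratios $\delta_{i+1}/\delta_i=\ep^{2/((n-1)+2(k-1))}d_{i+1}/d_i$ tend to $0$ and whose innermost scale still obeys $\ep=o(\delta_k)$, which is exactly the configuration treated in \cite{MP2,GMP}. I would therefore obtain $\|g_1\|_{L^{2n/(n+2)}(\ome)}=O\big(\ep^{\frac{n+6}{2(n+2)}\cdot\frac{n-2}{(n-1)+2(k-1)}}\big)$ as in those references: Lemma~\ref{lemma_expansion_1} controls the discrepancies $P_\ep U_i-U_i$ coming from $\partial\Omega$ and from the hole $\partial B(\xi_0,\ep)$, and the interactions between bubbles of different scales are estimated region by region on the annuli $A_l$ of \eqref{eq_annulus}.

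The only genuinely new point, and the step I expect to be the main obstacle, is the estimate of $g_2$. Writing $\nabla P_\ep U_i=\nabla U_i+\nabla(P_\ep U_i-U_i)$, a rescaling in \eqref{eq_pdxj} gives $\|\nabla U_i\|_{L^{2n/(n+2)}(\ome)}=O(\delta_i)$ (with an extra factor $|\log\delta_i|$ when $n=4$), which is already $o\big(\ep^{\frac{n+6}{2(n+2)}\cdot\frac{n-2}{(n-1)+2(k-1)}}\big)$ since $\delta_1=\ep^{\frac{n-2}{(n-1)+2(k-1)}}$ and $\frac{n+6}{2(n+2)}<1$. For the remaining piece, the crude bound $\|\nabla(P_\ep U_i-U_i)\|_{L^{2n/(n+2)}(\ome)}\le|\ome|^{1/n}\|\nabla(P_\ep U_i-U_i)\|_{L^2(\ome)}=O\big(\delta_i^{(n-2)/2}+(\ep/\delta_i)^{(n-2)/2}\big)$ is \emph{insufficient}, because $(\ep/\delta_k)^{(n-2)/2}$ exceeds $\ep^{\frac{n+6}{2(n+2)}\cdot\frac{n-2}{(n-1)+2(k-1)}}$; this is the same kind of loss discussed in the remark following Lemma~\ref{lemma_expansion_3}. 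Instead I would differentiate the expansion of $U_i-P_\ep U_i$ given by Lemma~\ref{lemma_expansion_1}: its leading terms are $\alpha_n\delta_i^{(n-2)/2}\nabla_x H(x,\xi_i)$, of $L^{2n/(n+2)}(\ome)$-norm $O(\delta_i^{(n-2)/2})$, and a term of pointwise size $\ep^{n-2}\delta_i^{-(n-2)/2}|x-\xi_0|^{-(n-1)}$, whose $L^{2n/(n+2)}(\ome)$-norm, computed over the region $\{\ep<|x-\xi_0|<\rho\}$, equals $O\big(\ep\,(\ep/\delta_i)^{(n-2)/2}\big)$ (with an extra $|\log\ep|^{1/2}$ when $n=4$), while the remaining error terms in Lemma~\ref{lemma_expansion_1} contribute no more. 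Summing over $i$ and using $\ep=o(\delta_i)$ one gets $\|g_2\|_{L^{2n/(n+2)}(\ome)}=O(\delta_1)=o\big(\ep^{\frac{n+6}{2(n+2)}\cdot\frac{n-2}{(n-1)+2(k-1)}}\big)$. Combining the two estimates yields $\|R_\ep\|=O\big(\ep^{\frac{n+6}{2(n+2)}\cdot\frac{n-2}{(n-1)+2(k-1)}}\big)$, and since $\frac{n+6}{2(n+2)}=\frac12+\frac{2}{n+2}>\frac12$ the stated bound $\|R_\ep\|=o\big(\ep^{\frac12\cdot\frac{n-2}{(n-1)+2(k-1)}}\big)$ follows at once.
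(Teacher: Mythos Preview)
Your approach is essentially the paper's: both split $R_\ep$ into a classical nonlinear part (your $g_1$, the paper's $R_1+R_2$) and the anisotropic part $\sum_i(-1)^{i+1}\nabla\log a\cdot\nabla P_\ep U_i$ (your $a^{-1}g_2$, the paper's $R_3$), and both defer the nonlinear piece to the existing tower-of-bubbles literature to obtain the leading exponent $\delta_1^{(n+6)/(2(n+2))}$. The only real difference is in handling the anisotropic term. The paper simply invokes \cite[Lemma~A.10]{KP2}, which bounds $i_\ep^*(\nabla\log a\cdot\nabla P_\ep U_i)$ via the Green representation of $\nabla P_\ep U_i$ together with Young's inequality, yielding $O(\delta_1^{1-\sigma})$ for every $\sigma>0$. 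You instead estimate $\|\nabla P_\ep U_i\|_{L^{2n/(n+2)}(\ome)}$ directly, splitting off $\nabla U_i$ and then differentiating the expansion of Lemma~\ref{lemma_expansion_1} for $U_i-P_\ep U_i$ to control the remainder; this gives the slightly sharper $O(\delta_1)$ (with a logarithm when $n=4$) and is more self-contained, at the cost of needing a gradient version of Lemma~\ref{lemma_expansion_1}, which is standard but not stated in the paper. Either way the anisotropic contribution is of strictly lower order than $\delta_1^{(n+6)/(2(n+2))}$, so both routes close the argument.
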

\begin{proof}
Write
\begin{align*}
R_\ep &= \Pi_{\ep}\pe\(i_{\ep}^*\(|V_\ep|^{p-1}V_\ep-\sik |P_\ep U_i|^{p-1}(-1)^{i+1}P_\ep U_i\)\)
+ \sik (-1)^{i+1} \Pi_{\ep}\pe\(i_{\ep}^*\(|P_\ep U_i|^{p-1}P_\ep U_i - U_i^p\)\)\\
&\ + \sik (-1)^{i+1} \Pi_{\ep}\pe\(i_{\ep}^*\(\nabla \log a \cdot \nabla PU_i\)\)
:= R_1 + R_2 +R_3.
\end{align*}
In a similar way to \cite[Section 3]{KP2}, we can check that $R_1,\ R_2 = O\(\delta_1^{n+6 \over 2(n+2)}\)$.
Also, \cite[Lemma A.10]{KP2} gives $R_3 = O(\delta_1^{1-\sigma})$ for any small $\sigma > 0$. 
Hence the lemma follows.
\end{proof}

The following lemma is crucial in the proof of Propositions \ref{prop_auxiliary} and
\ref{prop_reduction}.
\begin{lemma}\label{lemma_ortho}
Assume $i \le l$. Then it holds
\[(P\psi_i^j, P\psi_l^m)_{\ep} = \delta_{il}\delta_{jm} a(\xi_0) \tilde{c}_j \delta_i^{-2} + o\(\delta_i^{-2}\)\]
for some $\tilde{c}_0,\ \tilde{c}_1 = \cdots = \tilde{c}_n > 0$. Here $\delta_{jm} = 1$ if $j = m$ and $\delta_{jm} = 0$ otherwise.
\end{lemma}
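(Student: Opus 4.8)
The plan is to peel off the anisotropy and reduce the computation to the constant--coefficient situation treated in \cite{MP2,GMP}. Recalling that $\delta_1>\cdots>\delta_k$ (so $i\le l$ means $\delta_i\ge\delta_l$) and writing $a(x)=a(\xi_0)+\big(a(x)-a(\xi_0)\big)$ in the inner product, one has
\[
(P\psi_i^j,P\psi_l^m)_{\ep}=a(\xi_0)\inte\nabla P\psi_i^j\cdot\nabla P\psi_l^m\;+\;\inte\big(a(x)-a(\xi_0)\big)\,\nabla P\psi_i^j\cdot\nabla P\psi_l^m.
\]
The first integral is exactly the quantity that arises for the Coron--type domain $\ome=\Omega\setminus B(\xi_0,\ep)$ with a tower of bubbles of scales $\delta_1>\cdots>\delta_k$; by the arguments of \cite{MP2,GMP} --- or, directly, by integrating by parts using $-\Delta P_\ep\psi_i^j=-\Delta\psi_i^j=p\,U_i^{p-1}\psi_i^j$ in $\ome$, rescaling $x=\xi_i+\delta_i y$ on the diagonal and using the decay and the parity of $\psi_{1,0}^m$ off the diagonal --- it equals $\delta_{il}\delta_{jm}\,\tilde c_j\,\delta_i^{-2}+o(\delta_i^{-2})$, where $\tilde c_j:=\int_{\rn}|\nabla\psi_{1,0}^j|^2=p\int_{\rn}U_{1,0}^{p-1}(\psi_{1,0}^j)^2>0$. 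Here $\tilde c_1=\cdots=\tilde c_n$ because $\psi_{1,0}^1,\dots,\psi_{1,0}^n$ are obtained from one another by rotations of $\rn$, while $\tilde c_0$ is in general different; all are positive. It therefore remains to show that the anisotropic remainder is $o(\delta_i^{-2})$.

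For that remainder I would use $|a(x)-a(\xi_0)|\le\|\nabla a\|_{L^\infty(\Omega)}|x-\xi_0|$ together with the splitting $\ome=\big(B(\xi_0,\rho)\setminus B(\xi_0,\ep)\big)\cup\big(\Omega\setminus B(\xi_0,\rho)\big)$ used in Section \ref{sec_expansion}. On $\Omega\setminus B(\xi_0,\rho)$ the functions $\psi^j,\psi^m$ and their harmonic corrections, hence $P\psi_i^j$, $P\psi_l^m$ and their gradients, are $O(\delta_i^{(n-2)/2})$ and $O(\delta_l^{(n-2)/2})$ respectively by \eqref{eq_pdxn}--\eqref{eq_pdxj} and Lemma \ref{lemma_expansion_1}, so this piece is bounded by positive powers of $\delta_1$. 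On the inner region I would insert the expansions of $\nabla P\psi_i^j$ and $\nabla P\psi_l^m$ furnished by Lemma \ref{lemma_expansion_1}, change variables to the scales $\delta_i$ and $\delta_l$, and exploit that the weight $|x-\xi_0|$ is of order at most $\delta_i$ (resp.\ $\delta_l$) on the set where $\nabla\psi_i^j$ (resp.\ $\nabla\psi_l^m$) concentrates; this is what supplies the extra factor needed to pass below $\delta_i^{-2}$, while for $i<l$ one must additionally use the scale separation $\delta_l/\delta_i\to0$, the rate assignment $\delta_i=\ep^{((n-2)+2(i-1))/((n-1)+2(k-1))}$ and $\ep=o(\delta_l)$ to absorb the residual logarithmic factors that appear in low dimensions.

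The main obstacle is precisely this last step. The naive Cauchy--Schwarz bound $\big|\inte(a-a(\xi_0))\nabla P\psi_i^j\cdot\nabla P\psi_l^m\big|\le C\,\big\||x-\xi_0|\nabla P\psi_i^j\big\|_{L^2(\ome)}\|\nabla P\psi_l^m\|_{L^2(\ome)}=O(\delta_l^{-1})$ (up to logarithms) is \emph{not} $o(\delta_i^{-2})$ once $k$ is large: for instance for $i=1$, $l=k$ one has $\delta_l^{-1}\delta_i^{2}=\ep^{(n-2k)/((n-1)+2(k-1))}\not\to0$ as soon as $k\ge n/2$. This is the same phenomenon already flagged in the Remark after Lemma \ref{lemma_expansion_3}, and it forces a direct, term--by--term computation via Lemma \ref{lemma_expansion_1} in the spirit of the proof of Lemma \ref{lemma_expansion_3}, carefully tracking the supports and the smallness of $|x-\xi_0|$ on them, rather than a global norm estimate. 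All the remaining contributions --- the $H$--term and the hole--term of $\psi^m-P\psi^m$, the far region, and the low--dimensional logarithms --- are routine and each is $o(\delta_i^{-2})$ for $n\ge4$, which completes the proof.
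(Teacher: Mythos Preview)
Your decomposition $a=a(\xi_0)+(a-a(\xi_0))$ is \emph{not} the one the paper uses, and the difference matters precisely at the step you flag as the obstacle. The paper instead integrates by parts at the outset, using $-\text{div}(a\nabla P\psi_i^j)=-a\Delta P\psi_i^j-\nabla a\cdot\nabla P\psi_i^j$ and $-\Delta P\psi_i^j=pU_i^{p-1}\psi_i^j$, to write
\[
(P\psi_i^j,P\psi_l^m)_{\ep}=p\inte aU_i^{p-1}\psi_i^j\psi_l^m+p\inte aU_i^{p-1}\psi_i^j(P\psi_l^m-\psi_l^m)-\inte(\nabla a\cdot\nabla P\psi_i^j)P\psi_l^m.
\]
The first term is handled as in \cite{MP} and gives the main contribution; the second is controlled by the pointwise expansion \eqref{eq_expansion_1_1}. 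The key point is that the anisotropic term now carries $P\psi_l^m$ rather than $\nabla P\psi_l^m$: writing $|\nabla P\psi_i^j(x)|\le C\int|x-y|^{1-n}U_i^{p-1}|\psi_i^j|(y)\,dy$ via the Green representation and applying Young/HLS with exponents $q^{-1}=r^{-1}=(n+1)/(2n)-\eta$ gives the bound $O\big(\delta_l^{n/q-n/2}\delta_i^{n/r-(n+4)/2}\big)=o(\delta_i^{-2})$. This is exactly the ``crude'' method the Remark after Lemma~\ref{lemma_expansion_3} says is inadequate for Lemma~\ref{lemma_expansion_3}; it \emph{is} adequate here because the target is only $o(\delta_i^{-2})$, not $o(\delta_1)$.

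Your remainder, by contrast, keeps both gradients, and you correctly observe that Cauchy--Schwarz then fails for large $k$. You propose to rescue this by a ``direct, term--by--term computation via Lemma~\ref{lemma_expansion_1}'' but do not carry it out; that is the gap. It is easily closed: one integration by parts on your remainder yields $-\inte(\nabla a\cdot\nabla P\psi_i^j)P\psi_l^m+p\inte(a-a(\xi_0))U_i^{p-1}\psi_i^j P\psi_l^m$, i.e.\ the paper's anisotropic term plus a piece that is trivially $o(\delta_i^{-2})$ after rescaling (the factor $|a-a(\xi_0)|\le C|x-\xi_0|$ supplies an extra $\delta_i$). So your route can be made to work, but only by reverting to the paper's device; as written, the crucial estimate is asserted rather than proved.
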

\begin{proof}
By \eqref{eq_linearized}, we have
\[(P\psi_i^j, P\psi_l^m)_{\ep} = p \inte aU_i^{p-1}\psi_i^j\psi_l^m + p \inte aU_i^{p-1}\psi_i^j(P\psi_l^m - \psi_l^m) - \inte (\nabla a \cdot \nabla P\psi_i^j) P\psi_l^m.\]
The first term in the right-hand side in the above equality can be estimated as in \cite[Lemma A.5]{MP}, showing that
\[p \inte aU_i^{p-1}\psi_i^j\psi_l^m = \delta_{il}\delta_{jm} a(\xi_0) \tilde{c}_j \delta_i^{-2} + o(\delta_i^{-2}).\]
Also, the third term, which arises due to the anisotropy of $a$, can be handled by Green's representation formula of $\nabla P\psi_i^j$ and Young's inequality (see \cite[Lemma A.9]{KP2}). Indeed,
\begin{multline}\label{eq_ortho_1}
\inte |\nabla P\psi_i^j| |P\psi_l^m| \le \inte \inte {|P\psi_l^m(x)|U_i^p(y) \over |x-y|^{n-1}} dxdy\\
\le C \|P\psi_l^m\|_{L^q(\ome)} \big\|U_i^{p-1}\psi_i^j\big\|_{L^r(\ome)} = O\(\delta_l^{{n \over q} - {n \over 2}}\delta_i^{{n \over r} - {n+4 \over 2}}\)
\end{multline}
for some $C > 0$ and any $q, r > 1$ such that $q^{-1} + r^{-1} < (n+1)/n$.
Thus choosing $q^{-1} = r^{-1} = (n+1)/(2n) - \eta$ for some $\eta > 0$ sufficiently small, we conclude that the third term is $o\(\delta_i^{-2}\)$.
On the other hand, from \eqref{eq_expansion_1_1} in Lemma \ref{lemma_expansion_1}, we deduce that the second term satisfies
\begin{align*}
p \inte aU_i^{p-1}\psi_i^j(P\psi_l^m - \psi_l^m) &= \inte {\delta_i^{n \over 2} \over (\delta_i + |x - \xi_i|^2)^{n+2 \over 2}} \cdot O\(\delta_l^{n-4 \over 2} + {\ep^{n-2} \over \delta_l^{n \over 2} |x-\xi_0|^{n-2}}\)\\
&= O\(\delta_l^{n-4 \over 2}\delta_i^{n-4 \over 2}\) + O\(\ep^{n-2}\delta_l^{-{n \over 2}}\delta_i^{-{n \over 2}}\) = o\(\delta_i^{-2}\)
\end{align*}
if $j= 0$ and $m = 0$. The other cases (either $j \ge 1$ or $m \ge 1$) can be handled similarly. This completes the proof.
\end{proof}

\begin{proof}[Proof of Proposition \ref{prop_auxiliary}]
Let $L_{\ep}^{\md, \mt} \phi = \phi - p \cdot \Pi_{\ep}^{\pe}\(i^*\(|V|^{p-1}\phi\)\) $ for $\phi \in \(K_{\ep}^{\md, \mt}\)\pe$.
(Note that $K^{\pe}_{\ep}$ in \eqref{eq_space_perp} depends on the choice of $(\md, \mt)$. To emphasize it, we used the notation $\(K_{\ep}^{\md, \mt}\)\pe$.)
The main step for the proof is to show that there exist $\ep_0 > 0$ and $c > 0$ such that for each
$\ep \in (0, \ep_0)$ and $(\md, \mt) \in \Lambda_{d_0,t_0}$ the operator $L_{\ep}^{\md, \mt}$ satisfies $\big\|L_{\ep}^{\md, \mt} \phi\big\|_{\ep} \ge c\|\phi\|_{\ep}$ for all $\phi \in \(K_{\ep}^{\md, \mt}\)\pe$.

On the contrary, suppose that there are sequences of positive numbers $\{\ep_n\}_n$ and functions
$\{\phi_n\}_n$ such that $\phi_n \in \(K_{\ep_n}^{\md, \mt}\)\pe$, $\|\phi_n\|_{\ep_n} = 1$ for all $n \in \mathbb{N}$,
and $\big\|L_{\ep_n}^{\md, \mt}\phi_n\big\|_{\ep_n} \to 0$ and $\ep_n \to 0$ as $n \to \infty$.
If we denote $\psi_n = L_{\ep_n}^{\md, \mt}\phi_n$ and drop superscripts and subscripts, we obtain
\begin{equation}\label{eq_sketch_1}
\text{div}(a\nabla \phi) + p |V|^{p-1} \phi = \text{div}(a\nabla \psi) + \text{div}(a\nabla \tau)
\end{equation}
for some $\tau \in K$.
Then by writing $\tau = \sum_{l=1}^k\sum_{m = 0}^n c_{lm} P\psi_l^m$ with some $c_{lm} \in \mathbb{R}$ (see \eqref{eq_pdxn} and \eqref{eq_pdxj}) and multiplying \eqref{eq_sketch_1} by $a P\psi_i^j$ ($i = 1, \cdots, k$, $j = 0, \cdots, n$), we see that
\[p \inte a|V|^{p-1}\phi P\psi_i^j = - \sum_{l,m} c_{lm} (P\psi_i^j, P\psi_l^m)_{\ep}.\]
On the other hand, from \eqref{eq_space_perp}, \cite[(5.7)]{GMP} (see also \cite[(3.7)]{MP}) and the Young's inequality argument, we get
\[p \inte a|V|^{p-1}\phi P\psi_i^j = \inte (\nabla a \cdot \nabla P\psi_i^j) \phi + o\(\delta_i^{-1}\) = O\(\big\|\nabla P\psi_i^j\big\|_{L^{2n \over n+2}(\ome)}\) + o\(\delta_i^{-1}\) = o\(\delta_i^{-1}\).\]
Thus by considering Lemma \ref{lemma_ortho},
we conclude $c_{lm} = o(\delta_l)$ for each $l$ and $m$, or equivalently, $\|\tau\|_{\ep} = o(1)$.
Then by testing \eqref{eq_sketch_1} with $u := \phi - (\psi + \tau)$, we obtain $\inte a |V|^{p-1}u^2 \to 1/p$ as $\ep \to 0$.
However, the nondegeneracy of \eqref{eq_limit} implies $\inte a |V|^{p-1}u^2 \to 0$ as $\ep \to 0$ as shown in Step 3 and 4 of the proof of \cite[Lemma 5.1]{GMP} (again all the terms involved with $\nabla a$ are negligible).
Therefore a contradiction arises, which proves the main step.

Now, the Fredholm alternative implies that the inverse $L^{-1}$ of $L$ exists,
and by employing Lemma \ref{lemma_error} and the contraction mapping principle on the set $\left\{\phi \in K^{\pe} : \|\phi\|_{\ep} \le c\ep^{{n+6 \over 2(n+2)}\cdot{n-2 \over (n-1)+2(k-1)}}\right\}$ for some $c > 0$ small,
we can deduce that the operator
\[T(\phi) := L^{-1}(N(\phi) + R) \quad \text{where } N(\phi):= \Pi^{\pe} \(i^* \(|V+\phi|^{p-1}(V+\phi) - |V|^{4 \over n-2}V - p \cdot |V|^{p-1}\phi\)\) \]
has a fixed point, which satisfies \eqref{eq_auxiliary_1} and is a solution of \eqref{eq_main_auxiliary}.
Furthermore, the standard argument taking advantage of the implicit function theorem shows that the map $({\md, \ms}) \mapsto \phi_{\ep}^{\md, \ms}$ is $C^1$.
For a detailed treatment of these claims, we refer to \cite[Proposition 2.1]{GMP}.
\end{proof}

\begin{proof}[Proof of Proposition \ref{prop_reduction}]
Given $(\md, \ms) = ((d_1, \cdots, d_k), (\sigma_1, \cdots, \sigma_k)) \in \Lambda_{d_0, s_0}$, let $s$ be any of $d_1, \cdots, d_k$, $\sigma_{11}, \cdots, \sigma_{kn}$ where $\sigma_i = (\sigma_{i1}, \cdots, \sigma_{in}) \in \rn$ for each $i = 1, \cdots, k$.

Suppose $J'_{\ep}(\md, \ms) = 0$. Then, if we write $V = V_{\ep}^{\md, \ms}$ and $\phi = \phi_{\ep}^{\md, \ms}$, we get
\[0 = I'_{\ep}(V + \phi)(\ps V + \ps \phi) = \sum_{i,j} c_{ij}\(\big(P\psi_i^j, \ps V\big)_{\ep} - \big(\ps P\psi_i^j, \phi\big)_{\ep}\)\]
for some $c_{ij} \in \mathbb{R}$ ($i = 1, \cdots, k$ and $j = 0, \cdots, n$) where $\ps$ denotes $\dfrac{\partial}{\partial s}$. Hence the proof is done once we show that all $c_{ij}$'s are equal to 0.

By virtue of Lemma \ref{lemma_ortho}, we have
\[\big(P\psi_i^j, \ps V\big)_{\ep} = \begin{cases}
(-1)^{i+1} \left[\delta_{il} a(\xi_0) \tilde{c}_j d_i^{-1} \sigma_{ij}\delta_i^{-1} + o\(\delta_i^{-1}\)\right] &\text{if } s = d_l \text{ for } l = 1, \cdots, k,\\
(-1)^{i+1} \left[\delta_{il}\delta_{jm} a(\xi_0) \tilde{c}_j \delta_i^{-1} + o\(\delta_i^{-1}\)\right] &\text{if } s= \sigma_{lm} \text{ for } l = 1, \cdots, k,\ m = 0, \cdots, n
\end{cases}\]
where $\sigma_{i0} := 1$.
Besides one can deduce $\big\|\ps P\psi_i^j\big\|_{\ep} = O\(\delta_i^{-1}\)$ as in the proof of \cite[Lemma A.8]{KP2}, resulting $\big(\ps P\psi_i^j, \phi\big)_{\ep} = o\big(\big\|\ps P\psi_i^j\big\|_{\ep}\big) = o\(\delta_i^{-1}\)$.
These estimates are enough to draw that $c_{ij} = 0$, so our assertion is true.
\end{proof}

\end{document}